\newtheorem{definition}{Definition}[section]
\newtheorem{assumption}{Assumption}[section]
\newtheorem{theorem}{Theorem}[section]
\newtheorem{corollary}{Corollary}[section]
\newtheorem{remark}{Remark}[section]
\newtheorem{algorithm}{Algorithm}[section]
\newtheorem{lemma}{Lemma}[section]
\newcommand{\R}{\mathds R}
\newcommand{\A}{\mathcal{A}}
\newcommand{\OO}{\mathcal{O}}
\newcommand{\On}{\OO^n}
\font\transp=msbm10 scaled\magstep 1 
\def\transpR{\transp\char'122}
\def\nameuse#1{\csname#1\endcsname}
\def\R{\nameuse{@ifnextchar} [{\Rmv}{\mbox{\transpR}}}
\def\Rmv[#1]{\nameuse{@ifnextchar} [{\Rmat{#1}}{\Rvec{#1}}}
\def\Rvec#1{\mbox{\transpR}^{#1}}
\def\Rmat#1[#2]{\mbox{\transpR}^{{#1} \times {#2}}}
\def\@begintheorem#1#2{\par\bgroup{\bf #1\ #2. }\it\ignorespaces}
\def\@opargbegintheorem#1#2#3{\par\bgroup{\bf #1\ #2\ (#3). }\it\ignorespaces}
\def\@endtheorem{\egroup}
\def\R{\mathbb{R}}
\def\Rn{\R^n}
\def\Rpp{\R_{++}}
\def\S{\mathbb{S}}
\def\Sn{\S^n}
\newcommand{\textdef}[1]{\textit{#1}\index{#1}}
\newcommand{\LNGM}{\textbf{LNGM}\, }
\newcommand{\LNGMp}{\textbf{LNGM}}
\newcommand{\SQP}{\textbf{SQP}\, }
\newcommand{\SQPp}{\textbf{SQP}}
\newcommand{\SCQp}{\textbf{SCQ}}
\newcommand{\LICQ}{\textbf{LICQ}\, }
\newcommand{\LICQp}{\textbf{LICQ}}
\newcommand{\TRSproj}{\textbf{TRS$_{proj}$}\, }
\newcommand{\TRSprojp}{\textbf{TRS$_{proj}$}}
\newcommand{\Socsdp}{\textbf{SOCP/SDP}\, }
\newcommand{\Socsdpp}{\textbf{SOCP/SDP}}
\newcommand{\TRS}{\textbf{TRS}\, }
\newcommand{\TRSp}{\textbf{TRS}}
\newcommand{\TRSmu}{\textbf{TRS$_{\mu}$}\, }
\newcommand{\TRSmup}{\textbf{TRS$_{\mu}$}}
\newcommand{\nTRS}{\textbf{nTRS}\, }
\newcommand{\nTRSp}{\textbf{nTRS}}
\newcommand{\sTRS}{\textbf{sTRS}\, }
\newcommand{\sTRSp}{\textbf{sTRS}}
\newcommand{\eTRS}{\textbf{eTRS}\, }
\newcommand{\eTRSp}{\textbf{eTRS}}
\newcommand{\KKTo}{\textbf{KKT1}\, }
\newcommand{\KKTt}{\textbf{KKT2}\, }
\DeclareMathOperator{\Diag}{Diag}
\DeclareMathOperator{\Range}{Range}
\DeclareMathOperator{\Rank}{rank}
\DeclareMathOperator{\Null}{Null}
\DeclareMathOperator{\spanl}{span}
\begin{document}
\title{
Local Nonglobal Minima for Solving\\
Large Scale Extended Trust Region Subproblems
}
\author{Maziar Salahi
\thanks{Faculty of Mathematical Sciences, University of Guilan, Rasht,
Iran.  Thanks to the University of Guilan for
supporting the sabbatical leave 2015-16,  hosted by Prof. H. Wolkowicz at
the Department of Combinatorics and Optimization, University of Waterloo.
}
\and Akram Taati 
\thanks{Faculty of Mathematical Sciences, University of Guilan, Rasht,
Iran.}
\and Henry Wolkowicz
\thanks{Faculty of Mathematics, University of Waterloo, Canada.
Supported in part by NSERC and AFOSR grants.}
} 

\maketitle
\abstract{
We study large scale extended trust region subproblems (\eTRSp) i.e.,~the 
minimization of a general quadratic function
subject to a norm constraint, known as the trust region subproblem
(\TRSp) but with an additional linear inequality
constraint. It is well known that strong duality holds for the \TRS and
that there are efficient algorithms for solving large scale \TRS
problems. It is also known that there can exist at most one local
non-global minimizer (\LNGMp) for \TRSp. We combine this with known
characterizations for strong duality for \eTRS and, in particular,
connect this with the so-called \emph{hard case} for \TRSp.

We begin with a recent characterization of the minimum for the \TRS via a
generalized eigenvalue problem and extend this result to the \LNGMp. We
then use this to derive an efficient algorithm that finds the global
minimum for \eTRS by solving at most three generalized eigenvalue
problems.
}

~\\
{\bf Keywords:} Trust region subproblem, linear inequality constraint,
large scale optimization, generalized eigenvalue problem
~\\
{\bf Classification code:} 90C26, 90C30, 90C46, 65F15

~~\newline
\tableofcontents
\listoftables


\section{Introduction}
We study large scale instances of 
the \textdef{extended trust region subproblem, \eTRS}
\index{\eTRSp, extended trust region subproblem}
\begin{equation}
	\label{eTRS}
	\tag{eTRS}
	\begin{array}{rcl}
		p^{*}:=&\min  &\textdef{$q(x):=x^TAx +2a^Tx$} \\
	& \text{s.t.} &\textdef{$g(x):=||x||^2-\delta$}\leq 0, \\
		       &&\textdef{$\ell(x):=b^Tx-\beta$}\leq 0, 
\end{array}
\end{equation}
where $A\in \Sn$ is a real $n\times n$ symmetric matrix,  
$a,b\in\Rn\backslash \{0\}$ and $\beta\in\R, {\delta\in\Rpp}$.
\index{$\Sn$, symmetric real $n\times n$ matrices}
\index{symmetric real $n\times n$ matrices, $\Sn$}
Here a linear inequality constraint is added onto the standard
\textdef{trust region subproblem, \TRSp}.
\index{\TRSp, trust region subproblem}
The \TRS is an important subproblem in trust region methods for both
constrained and unconstrained problems, e.g.~\cite{ConGouToi:00}.
The \eTRS problem extends the \TRS and is a step toward solving \TRS
with a general number of inequality constraints. Such problems would be
useful for example in the subproblem of finding search directions for sequential
quadratic programming (\SQPp) methods for general nonlinear programming,
e.g.,~\cite{BoTo:95}.

It is known that, surprisingly,
strong duality holds for \TRS and the global minimizer
can be found efficiently and accurately, even though the objective
function is not necessarily convex. The early algorithms for 
moderate sized problems are based on exploiting the positive
semidefinite second order optimality conditions using a Cholesky factorization 
of the Lagrangian, see e.g.,~\cite{MoSo:83,Gay:81}. These methods were
extended to the large scale case using a parametrized eigenvalue
problem, e.g.~\cite{ReWo:94,LRSV:11,Hager:04,GoRoTho10}.
A related problem is finding the local non-global minimizer (\LNGMp) of \TRS
if it exists, see~\cite{Mar:94}. See~\cite{ConGouToi:00} for
more extensive details, applications, and background for \TRSp.

However, strong duality can fail for the \eTRSp. 
This is characterized in \cite{MR2486048} for the more general two
quadratic constraint problem. We show that this is exactly connected to
the so-called \emph{hard case} for \TRSp.
We use this fact to find an efficient approach for finding the
global minimizer for \eTRSp.
Recently, a generalized eigenvalue characterization for the \TRS 
optimum is derived in Adachi et al
\cite{AdachiIwataNakatsukasaTakeda:15} based on solving a
\emph{single} generalized eigenvalue problem.
This algorithm is shown to be extremely efficient for solving the \TRSp.
In this paper we extend this result for the \LNGM
optimum using the second largest real generalized eigenvalue of a matrix
pencil. This provides an efficient procedure for finding the \LNGMp.
From combining the solutions for \TRS and \LNGM we derive an efficient
algorithm for \eTRSp.

We include a discussion relating strong duality and stability for \eTRSp.
Extensive numerical tests show that our new algorithm is accurate and
can solve large scale problems efficiently.

Related previous work on strong duality and an eigenvalue approach 
on \eTRS appeared
in e.g.,~\cite{SalahiFallahi2015,SalahiTaati2015,MR3258522,HsiaSheu:13}.

\subsection{Notation and Preliminaries} 
We let
\[
\textdef{$\lambda_{\min}(A)=\lambda_1\leq 
\lambda_2 \leq \ldots \leq \lambda_n$}
\]
denote the eigenvalues of $A$ in nondecreasing order, and
$A=Q\Lambda Q^T$ be the \textdef{orthogonal spectral decomposition of
$A$} with the diagonal matrix of eigenvalues
$\Lambda=\Diag(\lambda_1, ..., \lambda_n)$. 
We denote the \textdef{orthogonal matrices, $\On$}. We let
$q_i$ denote the orthonormal columns of the eigenvector matrix $Q\in \On$. 
\index{$\On$, orthogonal matrices}

For $X\in \Sn$ the space of $n\times n$ real symmetric matrices,
we let \textdef{$X\succeq 0, \succ 0$} denote positive semidefiniteness
and definiteness, respectively.
In addition, we define the \textdef{vector of ones, $e$} of appropriate
size and $\Diag(v)$ be the diagonal matrix formed from the vector $v$.
\index{$e$, vector of ones}

It is now well known that, surprisingly,
the possibly nonconvex \TRS problem has the following characterization
of optimality with a positive semidefinite Lagrangian Hessian.
\begin{theorem}[\textdef{Characterization of Global Minimum of 
	\TRSp},~\cite{MoSo:83,Gay:81}]
	\label{thm:globalTRS}
	Define the 
	\begin{equation}
		\label{eq:lagrtrs}
		\textdef{Lagrangian of \TRSp, $L(x,\lambda)$}
	:=q(x)+\lambda (\|x\|^2-\delta).
		\index{$L(x,\lambda)$, Lagrangian of \TRSp}
\end{equation}
	\index{$L(x,\lambda)$, Lagrangian of \TRSp}
	The vector $x^* \in \Rn$ is a global optimum of \TRS if, and
	only if, there exists $\lambda \in \R$ such that
	\[
	\begin{array}{rcrcl}
\frac 12\nabla L(x^*,\lambda^*)&=&(A+\lambda^* I)x^*+ a & = & 0, \quad \lambda^* \geq 0 \\
\frac 12	\nabla^2 L(x^*,\lambda^*) &=&A+\lambda^* I & \succeq &  0\\
		&&\|x^*\|^2-\delta  & \leq &  0   \\
			&&\lambda(\|x^*\|^2-\delta)  & = & 0   \\
	\end{array}
	\]
	\qed
\end{theorem}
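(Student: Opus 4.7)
My plan is to prove the two implications separately. Sufficiency is a short weak-duality calculation that uses the PSD Hessian, while necessity reduces to applying the KKT theorem together with a direct perturbation argument to establish the second-order piece.

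For \textbf{sufficiency}, assume $(x^*,\lambda^*)$ satisfies the four listed conditions. Since $A+\lambda^*I\succeq 0$, the Lagrangian $L(\cdot,\lambda^*)$ is convex in $x$, and the gradient equation says its gradient vanishes at $x^*$, so $x^*$ globally minimizes $L(\cdot,\lambda^*)$ over all of $\R^n$. For any feasible $x$ (so $\|x\|^2\le\delta$), using $\lambda^*\ge 0$ and complementary slackness gives the chain
\[
q(x)\ \ge\ q(x)+\lambda^*(\|x\|^2-\delta)\ =\ L(x,\lambda^*)\ \ge\ L(x^*,\lambda^*)\ =\ q(x^*).
\]

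For \textbf{necessity}, split on whether the constraint is active. If $\|x^*\|^2<\delta$, take $\lambda^*=0$: then $x^*$ is an unconstrained local minimizer of $q$, so the gradient equation holds and $A\succeq 0$ by the standard second-order necessary condition, while complementary slackness is automatic. If $\|x^*\|^2=\delta$, then $\nabla g(x^*)=2x^*\neq 0$ (since $\delta>0$ forces $x^*\neq 0$), so \LICQ holds and the KKT theorem produces $\lambda^*\ge 0$ satisfying the gradient equation, primal feasibility, and complementary slackness. The only nontrivial remaining claim is $A+\lambda^*I\succeq 0$.

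I would establish this by contradiction. Let $v$ be a unit eigenvector with $Av=\lambda_1 v$ and suppose $\lambda_1+\lambda^*<0$. If $\langle x^*,v\rangle\neq 0$, set $\tilde x=x^*+tv$ with $t=-2\langle x^*,v\rangle$; then $\|\tilde x\|^2=\delta$ and, using $Ax^*+a=-\lambda^*x^*$, the change in objective collapses to $q(\tilde x)-q(x^*)=t^2(\lambda_1+\lambda^*)<0$. If instead $\langle x^*,v\rangle=0$, set $\tilde x=\alpha x^*+tv$ with $\alpha<1$ and $t^2=(1-\alpha^2)\delta$; the orthogonality kills the cross terms and the identity $a^Tv=-x^{*T}(A+\lambda^*I)v=0$ simplifies the remainder to a leading-order expression of sign $\lambda_1+\lambda^*<0$, which is strictly negative for $\alpha$ close enough to $1$. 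Either subcase contradicts the global optimality of $x^*$.

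\textbf{The main obstacle} is precisely this PSD step in the active case, and within it the orthogonal subcase $\langle x^*,v\rangle=0$, where the naive perturbation $x^*+tv$ leaves the feasible set and must be replaced by the rescaling $\alpha x^*+tv$. This subcase is exactly the seed of the well-known \emph{hard case} for \TRSp, and the sensitivity of the argument to it foreshadows why the \LNGM analysis and the generalized eigenvalue formulation developed later in the paper have to treat this configuration with special care.
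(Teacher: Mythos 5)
Your proof is correct: the weak-duality/convexity argument for sufficiency and the KKT-plus-feasible-perturbation argument for necessity (including the rescaling $\alpha x^*+tv$ to handle the orthogonal subcase when the constraint is active) all check out, e.g.\ the computation $q(x^*+tv)-q(x^*)=t^2(\lambda_1+\lambda^*)$ with $t=-2\langle x^*,v\rangle$ is exactly right. The paper itself gives no proof of this theorem, citing \cite{MoSo:83,Gay:81}; your argument is essentially the classical one from those references, so there is nothing to reconcile.
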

Now if $x^*$ is a global minimizer of \TRS and $\nabla^2
L(x^*,\lambda^*)$ is singular, then 
\[
	\lambda^*=-\lambda_1 \text{   and   } 0\neq a\in \Range(A+\lambda^*I)= 
	(\Null(A+\lambda^* I))^\perp 
\]
holds and leads to the following definition.
\begin{definition}[Hard Case]
	The \textdef{hard case} holds for \TRS if $a$ is orthogonal to
	the eigenspace corresponding to $\lambda_1$, 
	$\Null(A+\lambda^* I)$.
\end{definition}

In addition, the \textdef{Slater constraint qualification, \SCQp}, 
or strict feasibility, can be assumed without loss of generality for
feasible instances of \eTRSp. 
\begin{lemma}
	\label{lem:slater}
The \eTRS is feasible, respectively strictly feasible, if, and only if
\begin{equation}
	\label{eq:slatercqiff}
	-\sqrt \delta \|b\| \leq \beta,  \text{    respectively  }
	-\sqrt \delta \|b\| < \beta. 
\end{equation}
Moreover, if equality holds on the left 
in \eqref{eq:slatercqiff}, then \eTRS has
the unique feasible (and so optimal) point $x^*=-\frac {-\sqrt
\delta}{\|b\|} b$.
\end{lemma}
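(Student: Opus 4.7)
The plan is to reduce both equivalences to an elementary Cauchy--Schwarz / linear programming computation over the ball. Specifically, the feasible set of \eTRS is nonempty exactly when the linear constraint $b^Tx\leq \beta$ can be satisfied somewhere on the ball $\{x:\|x\|^2\leq\delta\}$, which is equivalent to
\[
\min\bigl\{b^Tx:\|x\|^2\leq\delta\bigr\}\leq\beta.
\]
First I would compute this minimum: by Cauchy--Schwarz, $b^Tx\geq -\|b\|\,\|x\|\geq -\sqrt{\delta}\,\|b\|$ for any $x$ in the ball, and equality is attained at the boundary point $\bar x:=-\sqrt{\delta}\,b/\|b\|$. This immediately gives the ``feasibility'' equivalence in \eqref{eq:slatercqiff}.

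For the strict version, one direction is clear: if \eTRS admits a strictly feasible $x$, then $-\sqrt{\delta}\,\|b\|\leq b^Tx<\beta$. For the converse, assuming $-\sqrt{\delta}\,\|b\|<\beta$, I would explicitly exhibit a strict interior point. The natural candidate is $x_t:=t\bar x=-t\sqrt{\delta}\,b/\|b\|$ for some $t\in(0,1)$: this yields $\|x_t\|^2=t^2\delta<\delta$ and $b^Tx_t=-t\sqrt{\delta}\,\|b\|$. Choosing $t\in(0,1)$ close enough to $1$ (and noting the case $\beta>0$ is trivial via $x=0$) gives $b^Tx_t<\beta$, so $x_t$ is strictly feasible.

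For the final assertion, suppose equality holds, i.e.\ $\beta=-\sqrt{\delta}\,\|b\|$, and let $x$ be any feasible point. Then
\[
-\sqrt{\delta}\,\|b\|=\beta\geq b^Tx\geq -\|b\|\,\|x\|\geq -\sqrt{\delta}\,\|b\|,
\]
so equality must hold throughout. The equality $b^Tx=-\|b\|\,\|x\|$ in Cauchy--Schwarz forces $x$ to be a nonpositive scalar multiple of $b$, while $\|x\|=\sqrt{\delta}$ then pins down $x=-\sqrt{\delta}\,b/\|b\|$ uniquely (this agrees with the expression in the statement). Since this is the sole feasible point, it is trivially optimal.

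The only minor subtlety, rather than a true obstacle, is handling the sign of $\beta$ in the strict-feasibility construction: when $\beta\geq 0$ the origin already works, while when $\beta<0$ one must genuinely push toward the boundary in the direction $-b$, and the strict inequality $-\sqrt{\delta}\,\|b\|<\beta$ is exactly what guarantees that the scaling parameter $t$ can be chosen strictly inside $(0,1)$.
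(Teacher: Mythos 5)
Your proof is correct and follows essentially the same route as the paper: both arguments reduce feasibility to checking whether $\min\{b^Tx : \|x\|^2\le\delta\}=-\sqrt{\delta}\,\|b\|$ is at most $\beta$, the paper computing this minimum via a Lagrangian stationarity calculation and you via Cauchy--Schwarz, with your version being more explicit about the strict-feasibility direction and the uniqueness claim. One small remark: the unique feasible point is $x^*=-\sqrt{\delta}\,b/\|b\|$ as you derive, so the double negative in the statement's formula $-\frac{-\sqrt{\delta}}{\|b\|}b$ appears to be a typo rather than a genuine disagreement with your computation.
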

\begin{proof}
Consider the problem $\min_x \{x^Tb : \|x\|^2\leq \delta\}$.
We can differentiate the Lagrangian to get  
\[
	0\neq x=\frac {-1}{2\lambda}b, \lambda >0.
\]
Since $x^Tb= \frac {-1}{2\lambda}b^Tb<0$, the minimum value is obtained
with $0<\lambda$ small. We now have
\[
	x^Tx= \frac {1}{(2\lambda)^2}\|b\|^2 \leq \delta
	\implies 2\lambda = \frac {\|b\|}{\sqrt \delta}.
\]
The result now follows by noting that the linear inequality constraint is
\[
	x^Tb=  -\frac {1}{2\lambda}\|b\|^2 \leq \beta
\]
and then substituting for the value found for $2\lambda$.
\end{proof}

We note that if the global solution of \TRS is feasible for
\eTRS then it is clearly optimal. And from the above, we know that it
can be found efficiently using a generalized eigenvalue problem.
Therefore from this and Lemma \ref{lem:slater} we can
make the following assumption for the theoretical part of the paper. (We
do not make this assumption for the algorithmic part.)
\begin{assumption}
	\label{assum:slater}
We assume in this paper that \eTRS is strictly feasible and that the
global solution of \TRS is infeasible for \eTRSp.
\end{assumption}

\subsection{Outline}
We continue in Section \ref{sect:lngm} with the details on the \LNGMp.
This includes known results from \cite{Mar:94} and
one of the main results of this paper in Theorem
\ref{mainlngmpeig}, the necessary conditions for a \LNGM using the
second largest real generalized eigenvalue of a matrix pencil. In Section
\ref{sect:charact} we discuss necessary and sufficient conditions for
strong duality to hold for \eTRSp.  A discussion on the stability of
\eTRS and resulting stability of our approach is given in
Section \ref{sect:stablty}.

The various optimality conditions for \eTRS are applied in Section
\ref{sect:globetrs}.  Included in this section are
outlines of the algorithms for an
efficient numerical procedure to find the global optimum of \eTRSp.
Our numerical results appear in Section \ref{sect:nums}. We provide
concluding remarks in Section \ref{sect:concl}.

\section{On a Local Non-global Minimizer (\LNGMp) of \TRS}
\label{sect:lngm}

\subsection{Background on \LNGMp}
\label{sect:backgr}
Let $x^{*}$ be a global optimal solution of \eTRSp. Then 
the linear constraint is either inactive $b^Tx^{*}<\beta$ or active
$b^Tx^{*}=\beta$.
In the former case, we have $x^{*}$ must be a local (not global
by Assumption \ref{assum:slater}) 
minimizer of \TRSp, i.e.,~we can have $x^*$ being a
\textdef{local non-global minimizer, \LNGMp}, of \TRSp.
\index{\LNGMp, local non-global minimizer}
We now provide some background on the \LNGMp.
\begin{lemma}\label{l11}
	If $A\succeq 0$, then no \LNGM exists.
\end{lemma}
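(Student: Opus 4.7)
The plan is to exploit convexity directly. When $A \succeq 0$, the objective $q(x) = x^T A x + 2 a^T x$ is a convex quadratic, and the feasible region $\{x : \|x\|^2 \leq \delta\}$ is a Euclidean ball, hence convex. Therefore \TRS is a convex optimization problem. A standard fact from convex analysis is that for a convex objective over a convex set, every local minimizer is automatically a global minimizer. By the very definition of a \LNGMp, which requires a local minimizer that is strictly \emph{not} a global minimizer, no such point can exist.

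In more detail, I would suppose for contradiction that $x^*$ is a \LNGMp, so there is a neighborhood $U$ of $x^*$ with $q(x^*) \leq q(x)$ for all feasible $x \in U$, but some feasible $\bar x$ satisfies $q(\bar x) < q(x^*)$. Consider the segment $x_t = (1-t) x^* + t \bar x$ for $t \in [0,1]$. By convexity of the feasible set, each $x_t$ is feasible, and by convexity of $q$,
\[
q(x_t) \leq (1-t) q(x^*) + t\, q(\bar x) < q(x^*) \quad \text{for all } t \in (0,1].
\]
Taking $t$ small enough places $x_t \in U$ while keeping $q(x_t) < q(x^*)$, contradicting the local minimality of $x^*$.

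There is essentially no obstacle here; the argument is the textbook observation that convex programs have no spurious local minima. If a proof phrased in the paper's language is preferred, one could alternatively verify the necessary first/second-order conditions at a putative \LNGM using the Lagrangian $L(x,\mu) = q(x) + \mu(\|x\|^2 - \delta)$ from \eqref{eq:lagrtrs}: the KKT stationarity $(A + \mu I)x^* + a = 0$ with $\mu \geq 0$ and $A \succeq 0$ forces $A + \mu I \succeq 0$, which is exactly the global optimality condition of Theorem \ref{thm:globalTRS}, so $x^*$ must be global, again contradicting the \LNGM assumption.
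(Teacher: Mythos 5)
Your proof is correct and takes essentially the same route as the paper: the paper's proof is precisely the observation that $A\succeq 0$ makes \TRS a convex program, where local minima are global. Your expanded segment argument and the alternative via the KKT/global-optimality conditions of Theorem \ref{thm:globalTRS} are both sound (the paper's own parenthetical alternative instead notes that Theorem \ref{thm:lngmmain} would force $0\le\lambda^*<-\lambda_1$, contradicting $\lambda_1\ge 0$).
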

\begin{proof}
	This is immediate since $A\succeq 0$ implies that \TRS is a
	convex problem, i.e.,~a problem where local minima are global
	minima. (It also follows from Theorem \ref{thm:lngmmain} below, since 
	$0\leq \lambda^* < -\lambda_1$.)
\end{proof}

Therefore, in this section we assume that $\lambda_1<0$. 
We continue and present some known results related to \LNGMp.
Then following the results in
\cite{AdachiIwataNakatsukasaTakeda:15}, we
show that the \LNGM can be computed via a generalized eigenvalue problem.

\begin{theorem}
	[\textdef{Necessary Conditions for \LNGMp}, {\cite{Mar:94}}]
	\label{thm:lngmmain}
	Let $x^{*}$ be a \LNGMp. Choose $V \in \R^{n\times (n-1)}$ such
	that
	$\left[ \begin{array}{c|c}
			\frac 1{\|x^*\|} x^* ~&~ V
	\end{array} \right] \in \On$.
	Then there exists a unique $\lambda^{*} \in 
	(\max\{0,-\lambda_2\}, -\lambda_1)$\footnote{\label{foot:pos}
		We have added the fact that $\lambda^*>0$
		whereas only nonnegativity is given in
		\cite[Theorem 3.1]{Mar:94}. Strict complementarity is
	proved in \cite[Prop. 3.4]{LuPaRo:96}. In fact, it is easy to
	see by the second order conditions that
	strict complementarity holds as well for the global
minimum for \TRS in the $\lambda_1<0$ case.}
	such that
\begin{align}\label{eq}
	V^T(A+\lambda^{*} I)V \succeq 0, \qquad
(A+\lambda^{*} I) x^{*}=-a, \qquad  ||x^{*}||^2=\delta.
\end{align}
\qed
\end{theorem}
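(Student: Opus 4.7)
Plan: I would layer first- and second-order optimality conditions for a local minimizer of \TRSp, combined with Theorem \ref{thm:globalTRS} for the upper bound on $\lambda^*$ and Cauchy interlacing for the lower bound.

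First, since $x^*$ is a local minimizer of \TRSp, first-order KKT produces a multiplier $\lambda^* \geq 0$ with $(A+\lambda^*I)x^* + a = 0$ and $\lambda^*(\|x^*\|^2-\delta) = 0$. I would immediately rule out an inactive norm constraint: if $\|x^*\|^2 < \delta$ then $\lambda^* = 0$, so $x^*$ would be an unconstrained local minimizer of $q$, and the usual second-order necessary condition would give $A \succeq 0$, contradicting the standing assumption $\lambda_1 < 0$. Hence $\|x^*\|^2 = \delta$ and the last two equalities in \eqref{eq} are in hand.

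Next, for the upper bound $\lambda^* < -\lambda_1$: if $\lambda^* \geq -\lambda_1$ then $A+\lambda^*I \succeq 0$ and Theorem \ref{thm:globalTRS} would identify $x^*$ as a global minimizer of \TRSp, contradicting that $x^*$ is a \LNGMp. For the reduced positive semidefinite condition, the standard second-order necessary condition along directions tangent to the sphere at $x^*$, which form the subspace $\{x^*\}^\perp = \Range(V)$, yields $V^T(A+\lambda^*I)V \succeq 0$. Cauchy interlacing of the eigenvalues of $V^T(A+\lambda^*I)V$ inside those of $A+\lambda^*I$ then forces at most one negative eigenvalue of $A+\lambda^*I$; since $\lambda^* < -\lambda_1$ makes $\lambda_1+\lambda^* < 0$ that sole negative eigenvalue, interlacing gives $\lambda_2+\lambda^* \geq 0$, i.e., $\lambda^* \geq -\lambda_2$.

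For strictness of $\lambda^* > \max\{0,-\lambda_2\}$ and uniqueness of $\lambda^*$ in the open interval: the boundary case $\lambda^* = 0$ combined with $\lambda_1 < 0$ and $V^T A V \succeq 0$ forces $x^*/\|x^*\|$ to be a unit eigenvector for $\lambda_1$, and a second-order expansion along a suitable feasible arc on the sphere then produces a strict descent direction contradicting local minimality; an analogous perturbation along a null direction of $A+\lambda^*I$ lying in $\Range(V)$ handles the case $\lambda^* = -\lambda_2$. For uniqueness, on the open interval $(-\lambda_2,-\lambda_1)$ the matrix $A+\lambda I$ is invertible with signature $(n-1,1)$, so $x(\lambda) = -(A+\lambda I)^{-1}a$ is smooth; the secular equation $\|x(\lambda)\|^2 = \delta$ can then be shown to have at most one root compatible with the PSD reduced-Hessian constraint.

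The main obstacle, I expect, will be the strict inequalities $\lambda^* > 0$ and $\lambda^* > -\lambda_2$ together with uniqueness. The reduced Hessian is only positive semidefinite rather than positive definite in these boundary configurations, so the degenerate cases are not handled by a direct strict second-order sufficient argument and require carefully chosen feasible curves on the sphere, together with a fine analysis of the secular equation on $(-\lambda_2,-\lambda_1)$, to rule them out.
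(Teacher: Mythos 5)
The paper offers no proof of this theorem: it is quoted from \cite{Mar:94} with the strict positivity $\lambda^*>0$ imported from \cite{LuPaRo:96}, so you are reconstructing the cited argument rather than matching anything in the text. The routine parts of your plan are sound and essentially what Martinez does: activity of the norm constraint (else $A\succeq 0$), the stationarity equation, the reduced second-order condition on $\{x^*\}^\perp=\Range(V)$, the bound $\lambda^*<-\lambda_1$ via Theorem \ref{thm:globalTRS}, and $\lambda^*\geq-\lambda_2$ via Cauchy interlacing. Note also that uniqueness of $\lambda^*$ for the given $x^*$ is immediate from $x^*\neq 0$ and $(A+\lambda^*I)x^*=-a$ (the equation $\lambda^*x^*=-a-Ax^*$ pins $\lambda^*$ down); the secular-equation analysis you invoke belongs to the uniqueness of the \LNGM itself, which is Theorem \ref{lngmone}, not this statement.

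The genuine gaps are exactly where you predicted them, but the moves you sketch to close them would fail. First, the claim that $\lambda^*=0$ together with $\lambda_1<0$ and $V^TAV\succeq 0$ forces $x^*/\|x^*\|$ to be a $\lambda_1$-eigenvector is false: take $A=\Diag(-1,1)$ and $x^*=\sqrt{\delta/2}\,(1,1)^T$, for which $V^TAV=0$. The correct route is simpler: $\lambda^*=0$ gives $\nabla q(x^*)=0$, so for every inward direction $d$ with $x^{*T}d<0$ the point $x^*+td$ is feasible and $q(x^*+td)-q(x^*)=t^2d^TAd\geq 0$; by continuity and evenness $A\succeq 0$, contradicting $\lambda_1<0$. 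Second, and more seriously, the proposed perturbation along a null direction of $A+\lambda^*I$ cannot rule out $\lambda^*=-\lambda_2$. In that case one does get $q_2\perp x^*$ and $q_2\perp a$, but along the feasible great circle $x(\theta)=x^*\cos\theta+\sqrt{\delta}\,q_2\sin\theta$ one computes
\[
q(x(\theta))-q(x^*)=\sin^2\theta\,\bigl(\delta\lambda_2-x^{*T}Ax^*\bigr)+2(1-\cos\theta)\bigl(x^{*T}Ax^*+\lambda^*\delta\bigr),
\]
whose $\theta^2$ coefficient is $\delta(\lambda_2+\lambda^*)=0$ and whose $\theta^4$ coefficient is $\tfrac14\bigl(x^{*T}Ax^*-\delta\lambda_2\bigr)$, of undetermined sign. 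So the one-dimensional null-direction arc produces no descent in general; excluding $\lambda^*=-\lambda_2$ requires a genuinely two-dimensional perturbation mixing the $q_1$ and $q_2$ directions (or Martinez's analysis of $\phi$ at the endpoint). Since the open-interval containment is precisely what this theorem adds beyond standard first- and second-order necessary conditions, that step is the heart of the result and remains unproved in your proposal.
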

\begin{corollary}\label{cor:hardcase}
	If the so-called \textdef{hard case} holds for \TRSp, i.e.,~$a$
	is orthogonal to the eigenspace corresponding to $\lambda_1$,
	then no \LNGM exists.\footnote{The hard case arises in
	algorithms for \TRSp. The singularity that can arise
	requires special treatment, see e.g.,~\cite{MoSo:83}. In fact,
	it can be handled by a shift and deflation step,
see~\cite{FortinWolk:03}.}
\end{corollary}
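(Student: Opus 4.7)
The plan is to argue by contradiction directly from Theorem \ref{thm:lngmmain}. Suppose an \LNGM $x^*$ exists; then the theorem supplies a multiplier $\lambda^{*} \in (\max\{0,-\lambda_2\}, -\lambda_1)$, an orthonormal basis $V$ of $\{x^*\}^\perp$, and the three relations in \eqref{eq}. Before using anything else, I would observe that the admissible interval for $\lambda^*$ is nonempty only when $\lambda_1 < \lambda_2$, so $\lambda_1$ is automatically a simple eigenvalue; denote its unit eigenvector by $q_1$. (If $\lambda_1$ had multiplicity greater than one, the interval in Theorem \ref{thm:lngmmain} would already be empty and there would be nothing to prove, independent of the hard-case hypothesis.)

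Next I would exploit the hard-case assumption $q_1^T a = 0$ against the stationarity equation $(A+\lambda^{*}I) x^{*} = -a$. Taking the inner product with $q_1$ collapses the left-hand side to $(\lambda_1 + \lambda^{*})\, q_1^T x^{*}$ and annihilates the right-hand side. Since $\lambda^{*} < -\lambda_1$ makes $\lambda_1 + \lambda^{*}$ strictly negative, the only way the equation can hold is $q_1^T x^{*} = 0$, i.e., $q_1 \in \{x^*\}^\perp = \Range(V)$.

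Finally, I would substitute this back into the second-order condition $V^T(A+\lambda^{*}I)V \succeq 0$. Writing $q_1 = Vw$ for a suitable $w$, the quadratic form evaluates to
\[
w^T V^T(A+\lambda^{*}I) V w \;=\; q_1^T (A+\lambda^{*}I) q_1 \;=\; \lambda_1 + \lambda^{*} \;<\; 0,
\]
contradicting positive semidefiniteness. This contradiction shows that no \LNGM can exist in the hard case.

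The main step is really just lining up two pieces of information: the hard-case orthogonality $q_1 \perp a$ forces $q_1 \perp x^{*}$ through the stationarity equation, and then $q_1$ sits inside the subspace on which $A+\lambda^{*}I$ is supposed to be positive semidefinite while $q_1$ is a strictly negative direction for it. I do not foresee a genuine obstacle; the only subtlety worth flagging is the potential degeneracy at $\lambda_1 = \lambda_2$, which is handled for free by the strict interval in Theorem \ref{thm:lngmmain} and therefore needs no separate case analysis.
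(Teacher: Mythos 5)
Your proof is correct and is essentially the same argument as the paper's: the hard-case orthogonality $q_1^Ta=0$ combined with the stationarity equation forces $q_1^Tx^*=0$, so $q_1\in\Range(V)$ and the quadratic form $q_1^T(A+\lambda^*I)q_1=\lambda_1+\lambda^*<0$ contradicts the second-order condition of Theorem \ref{thm:lngmmain}. The only cosmetic difference is that you work coordinate-free with the eigenvector $q_1$ while the paper first rotates $A$ to diagonal form and uses $e_1$; your explicit remark that a nonempty multiplier interval forces $\lambda_1<\lambda_2$ is a nice touch the paper leaves implicit.
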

\begin{proof}
The proof is given in {\cite[Lemma 3.2]{Mar:94}}. 
We include a separate proof to emphasize that a stronger result holds
as is given in Corollary \ref{cor:hardcasesimple} below.

After a rotation if needed, we can assume for simplicity that 
$A=\textdef{$\Diag$}(\lambda)$ is a diagonal matrix. To obtain a
contradiction, we assume that $a^Tq_1=0$. From this assumption
we have that the first element $a_1=0$. From  \eqref{eq} this implies
that the first element $x_1^*=0$ which yields that the first eigenvector
given by the first unit vector $e_1$ satisfies $e_1=Vu$, for some $u$.
We have $u^TV^T(A+\mu I)Vu = \lambda_1+\lambda^*<0$. This contradicts
the second order semidefiniteness condition in \eqref{eq}.
\end{proof}
\begin{corollary}\label{cor:hardcasesimple}
If the weak form of the \textdef{hard case} holds for \TRSp, i.e.,~$a$
is orthogonal to \underline{some} eigenvector corresponding to $\lambda_1$,
then no \LNGM exists.
\end{corollary}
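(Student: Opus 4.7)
The plan is to follow the same logical skeleton as the proof of Corollary \ref{cor:hardcase}, while observing that the argument there only ever used \emph{one} eigenvector of $\lambda_1$; the diagonalization of $A$ and the vanishing of the coordinate $a_1$ were really just a convenient way to access that single eigenvector. Replacing $q_1$ by an arbitrary unit eigenvector $q$ with $a^Tq=0$ makes the reduction to diagonal form unnecessary.

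So suppose for contradiction that $x^*$ is an \LNGM with associated multiplier $\lambda^* \in (\max\{0,-\lambda_2\},-\lambda_1)$ as in Theorem \ref{thm:lngmmain}, and let $q$ be a unit eigenvector of $A$ with $Aq=\lambda_1 q$ and $a^Tq=0$, which is guaranteed to exist by the weak hard case hypothesis. The first step is to left-multiply the stationarity equation $(A+\lambda^* I)x^*=-a$ by $q^T$; this gives
\[
(\lambda_1+\lambda^*)\,q^Tx^* \;=\; -q^Ta \;=\; 0.
\]
Since $\lambda^*<-\lambda_1$, the factor $\lambda_1+\lambda^*$ is strictly negative, and hence $q^Tx^*=0$.

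The second step is to exploit this orthogonality in the second order condition. Because $q \perp x^*$, the vector $q$ lies in the column space of $V$, so we can write $q=Vu$ for some unit $u\in\R^{n-1}$. Then
\[
u^T V^T (A+\lambda^* I) V u \;=\; q^T(A+\lambda^* I)q \;=\; \lambda_1+\lambda^* \;<\; 0,
\]
which contradicts $V^T(A+\lambda^* I)V\succeq 0$ from \eqref{eq}. This contradiction forces the nonexistence of an \LNGMp.

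There is essentially no obstacle here: the sharper hypothesis just shortens the earlier argument, because the only place where diagonality of $A$ entered in Corollary \ref{cor:hardcase} was to extract the single coordinate direction that is now supplied directly by $q$. The one point worth being careful about is that $\lambda_1+\lambda^*$ is strictly negative (not merely nonzero), which is where the footnoted strict inequality $\lambda^*<-\lambda_1$ from Theorem \ref{thm:lngmmain} is essential.
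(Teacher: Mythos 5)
Your proof is correct and follows essentially the same route as the paper: the paper's own argument for Corollary \ref{cor:hardcase} already only uses a single eigenvector of $\lambda_1$ orthogonal to $a$ (deducing $q^Tx^*=0$ from stationarity and then violating the reduced second-order condition via $q=Vu$), and the paper proves Corollary \ref{cor:hardcasesimple} by exactly this observation. Your coordinate-free presentation merely avoids the rotation to diagonal form, which is a cosmetic difference.
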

\begin{proof}
	The proof of Corollary \ref{cor:hardcase} just needed one eigenvector orthogonal to $a$.
\end{proof}

Now let
\index{$\phi(\lambda):=\left({\rm norm}((A+\lambda I)^{-1}a)\right)^2$}
 \begin{align*}
\phi(\lambda):=\| (A+\lambda I)^{-1}a)\|^2.
\end{align*}
For 
\[
	\lambda \in (\max\{0,-\lambda_2\}, -\lambda_1), 
\]
Theorem \ref{thm:lngmmain} shows that the equation $\phi(\lambda)=\delta$
is a necessary condition for a \LNGMp.
Furthermore, using  the eigenvalue decomposition of $A$ we have
\begin{align}\label{eh}
\phi(\lambda)&=\sum_{i=1}^n \frac{(q_i^Ta)^2}{(\lambda_i+\lambda)^2},\notag \\
\phi'(\lambda)&=-2\sum_{i=1}^n\frac{(q_i^Ta)^2}{(\lambda_i+\lambda)^3},\\
\phi''(\lambda)&=6\sum_{i=1}^n\frac{(q_i^Ta)^2}{(\lambda_i+\lambda)^4}.\notag
\end{align}
The equations (\ref{eh}) imply that the function $\phi(\lambda)$ is strictly 
convex on $\lambda \in (\max\{0,-\lambda_2\}, -\lambda_1)$ and so it has at 
most two roots in the interval $(\max\{0,-\lambda_2\}, -\lambda_1)$.
 The following theorem states that  only the largest root can correspond
 to a \LNGMp.
 \begin{theorem}(\cite[Theorem 3.1]{Mar:94})
	 \label{lngmone}
	 \begin{enumerate}
		 \item
	If $x^*$ is a \LNGMp, then \eqref{eq} holds with a unique
	$\lambda^{*}\in (\max\{0,-\lambda_2\}, -\lambda_1)$ 
	and with $\phi'(\lambda^{*})\geq 0$. 
\item
	There exists at most one \LNGMp.
	 \end{enumerate}
	 \qed
\end{theorem}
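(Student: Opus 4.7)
The first part of \eqref{eq} (stationarity, the multiplier range, and feasibility) is already delivered by Theorem \ref{thm:lngmmain}; what remains is the sign condition $\phi'(\lambda^*)\geq 0$ and the uniqueness claim. I would handle both by exploiting structural properties of $\phi$ on the interval $I:=(\max\{0,-\lambda_2\},-\lambda_1)$.

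For the sign condition, the first step is to rewrite $\phi'$ geometrically. Setting $x(\lambda):=-(A+\lambda I)^{-1}a$, differentiation of $(A+\lambda I)x(\lambda)=-a$ gives $x'(\lambda)=-(A+\lambda I)^{-1}x(\lambda)$, so with $M:=A+\lambda^* I$,
\begin{equation*}
\phi'(\lambda^*) \;=\; 2(x^*)^T x'(\lambda^*) \;=\; -2(x^*)^T M^{-1} x^*.
\end{equation*}
Thus the target inequality is equivalent to $(x^*)^T M^{-1} x^*\leq 0$. Since $\lambda^*\in I$, the matrix $M$ has inertia $(1,0,n-1)$: one negative eigenvalue $\lambda_1+\lambda^*$ and $n-1$ positive ones. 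The second-order necessary condition $\tilde M:=V^TMV\succeq 0$ from Theorem \ref{thm:lngmmain}, together with Corollary \ref{cor:hardcasesimple} (which rules out a null vector of $M$ lying in the tangent space, so $\tilde M$ is in fact positive definite), then forces the scalar Schur complement
\begin{equation*}
S \;:=\; \frac{(x^*)^T M x^*}{\|x^*\|^2} - w^T \tilde M^{-1} w, \qquad w := \frac{V^T M x^*}{\|x^*\|},
\end{equation*}
to satisfy $S<0$ by Haynsworth inertia additivity: the single negative inertia of $M$ cannot sit in $\tilde M$, so it must sit in $S$. Reading off the $(1,1)$ entry of $M^{-1}$ in the orthonormal basis $(x^*/\|x^*\|,V)$ via the block-inverse formula yields $(x^*)^T M^{-1} x^* = \|x^*\|^2/S < 0$, which delivers $\phi'(\lambda^*)>0$.

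For part 2, I would use strict convexity of $\phi$ on $I$, already recorded in \eqref{eh}. This implies $\phi(\lambda)=\delta$ has at most two roots in $I$; if two, the smaller has $\phi'<0$ and the larger has $\phi'>0$. By part 1, any \LNGMp's multiplier must satisfy $\phi'(\lambda^*)\geq 0$, which pins $\lambda^*$ to the larger root (or to a single tangency with $\phi'=0$). Since $\lambda^*$ then determines $x^*=-M^{-1}a$ uniquely, at most one \LNGMp{} can exist.

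The main obstacle is the Schur-complement/inertia bookkeeping in the middle step: ensuring that $\tilde M$ is non-singular so that Haynsworth applies in its cleanest form, and tracking the single negative eigenvalue of $M$ through the block decomposition. An edge case with singular $\tilde M$ would have to be handled by identifying it with the tangency $\phi'(\lambda^*)=0$ itself: a null vector $u$ of $\tilde M$ forces $MVu\in\spanl\{x^*\}$, and then $(x^*)^T Vu=0$ yields $(x^*)^T M^{-1} x^*=0$. So in every case $\phi'(\lambda^*)\geq 0$ holds, completing the argument.
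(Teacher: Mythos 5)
Your argument is correct, but it is worth noting that the paper does not actually prove this theorem: it is imported verbatim from Martinez \cite[Theorem 3.1]{Mar:94} with only a citation and a \qed, so your proposal supplies a self-contained proof where the paper has none. Your route is sound: the identity $\phi'(\lambda^*)=-2(x^*)^T(A+\lambda^*I)^{-1}x^*$ is right (it matches \eqref{eh} in the eigenbasis), the inertia of $A+\lambda^*I$ on the interval $(\max\{0,-\lambda_2\},-\lambda_1)$ is indeed $(1,0,n-1)$, and the Haynsworth/Schur-complement step correctly forces the single negative inertia into the scalar Schur complement, giving $(x^*)^TM^{-1}x^*<0$ when $\tilde M=V^TMV$ is nonsingular; your separate treatment of singular $\tilde M$ (where $MVu\in\spanl\{x^*\}$ with nonzero coefficient, since $M$ itself is invertible on the open interval, forces $(x^*)^TM^{-1}x^*=0$) closes the remaining case, and the uniqueness argument from strict convexity of $\phi$ plus $x^*=-M^{-1}a$ is standard and correct. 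The one blemish is your appeal to Corollary \ref{cor:hardcasesimple} to conclude $\tilde M\succ 0$: that corollary concerns orthogonality of $a$ to the eigenspace of $\lambda_1$ and says nothing about null vectors of $\tilde M$ (and $M$ itself has no null vectors at $\lambda^*$ anyway), so the claimed positive definiteness is unjustified as stated --- but this is harmless because your edge-case analysis covers exactly the situation where $\tilde M$ is singular, so the two cases together are exhaustive. Compared with simply citing Martinez, your proof buys a transparent linear-algebraic explanation of why the second-order condition pins the multiplier to the larger root of $\phi(\lambda)=\delta$, which dovetails nicely with the generalized-eigenvalue ordering used later in Theorem \ref{mainlngmpeig}.
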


\subsection{Characterization using a Generalized Eigenvalue Problem}
We now consider the problem of efficiently computing the \LNGMp.
Due to the results in Section \ref{sect:backgr} we can make the
following two assumptions. 
\begin{assumption}
	\label{assum:indefhard}
\begin{enumerate}
	\item  The smallest two eigenvalues of $A$ satisfy
		\[
			\lambda_1<\min\{0,\lambda_2\}.
		\]
	\item
		The hard case does \emph{not} hold, i.e.,~$a$ is
		\emph{not} orthogonal to the eigenspace corresponding to
		$\lambda_1$ which here is $\spanl(q_1)$ the span of the
		eigenvector of $\lambda_1$, $a^Tq_1\neq 0$.
\end{enumerate}
\end{assumption}

To the best of our knowledge, the only algorithm for computing the \LNGM
is the one by Martinez \cite{Mar:94} which tries to find the largest
root of the equation $\phi(\lambda)=\delta$ for $\lambda \in
(\max\{0,-\lambda_2\},-\lambda_1)$ via an iterative algorithm. Each step of his
approach requires solving an indefinite system of linear equations which
can be expensive for large scale instances. In what follows, we follow
on the ideas of \cite{AdachiIwataNakatsukasaTakeda:15} and present
a new algorithm that shows that
the \LNGM can be computed efficiently by a generalized eigenvalue problem. 
Our result is then used to solve large instances of \eTRSp.

Recently, Adachi et al.~\cite{AdachiIwataNakatsukasaTakeda:15}
designed an efficient algorithm for \TRS
which solves just one generalized eigenvalue problem.
They consider the following $2n\times 2n$ regular symmetric
matrix pencil which has $2n$ finite eigenvalues.\footnote{The
objective function in~\cite{AdachiIwataNakatsukasaTakeda:15} is $1/2$
our objective function and $I$ in the pencil is a general $B\succ 0$.}
$$ 
M(\lambda)=\begin{bmatrix}-I & A+\lambda I\\A+\lambda I& 
                         -\frac1 {\delta}{aa^T}\end{bmatrix}.
$$
We can rephrase Theorem \ref{thm:globalTRS} as
	$x_g^{*}$ is a global optimal solution of \TRS if, and only
if, the following system is consistent.
\begin{subequations}
	\label{eq:optTRS}
\begin{align}
&(A+\lambda_g^{*} I) x_g^{*}=-a, \label{4}\\
& A+\lambda_g^{*}I\succeq 0, \quad  \lambda_g^{*}\geq 0 \text{  and
unique}, \label{7} \\
& ||x_g^{*}||^2\leq \delta, \label{5}\\
& \lambda_g^{*}(||x_g^{*}||^2-\delta)=0.\label{6}
\end{align}
\end{subequations}
\begin{lemma}[\textdef{Generalized Eigenvalue of Pencil}, \cite{AdachiIwataNakatsukasaTakeda:15}]
	\label{lem:geneigtrs}
	For every Lagrange multiplier
	$\lambda_g^*\neq 0$, satisfying the stationarity condition \eqref{4}
	with equality in the quadratic constraint \eqref{5},
we have $\det M(\lambda_g^*)=0$, i.e.,~$\lambda^*_g$
is a \textdef{generalized eigenvalue of the pencil $M(\lambda)$}.
	\index{KKT, Karush-Kuhn-Tucker}
\end{lemma}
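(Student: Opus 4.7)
The plan is to exhibit an explicit nonzero $v = \begin{bmatrix} u \\ w \end{bmatrix} \in \R^{2n}$ in the null space of $M(\lambda_g^*)$, which immediately yields $\det M(\lambda_g^*) = 0$. Writing $M(\lambda_g^*) v = 0$ blockwise reduces the task to finding $(u,w) \neq 0$ satisfying
\[
u \;=\; (A + \lambda_g^* I)\, w \qquad \text{and} \qquad (A + \lambda_g^* I)\, u \;=\; \tfrac{1}{\delta}(a^T w)\, a.
\]
I would then split the argument according to whether $A + \lambda_g^* I$ is invertible.

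In the nonsingular case, the ansatz $u := x_g^*$, $w := (A + \lambda_g^* I)^{-1} x_g^*$ makes the first block equation automatic. For the second, I would rewrite the stationarity condition \eqref{4} as $x_g^* = -(A + \lambda_g^* I)^{-1} a$ and compute
\[
a^T w \;=\; a^T (A + \lambda_g^* I)^{-1} x_g^* \;=\; -\norm{(A + \lambda_g^* I)^{-1} a}^2 \;=\; -\norm{x_g^*}^2 \;=\; -\delta,
\]
where the last equality uses the active-constraint hypothesis from \eqref{5}, \eqref{6}. Substituting gives $\tfrac{1}{\delta}(a^T w)\,a = -a = (A + \lambda_g^* I) x_g^* = (A + \lambda_g^* I) u$, as required, and $v \neq 0$ since $\norm{x_g^*}^2 = \delta > 0$ forces $x_g^* \neq 0$.

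In the singular case, consistency of \eqref{4} forces $a \in \Range(A + \lambda_g^* I) = \Null(A + \lambda_g^* I)^\perp$, so $a^T z = 0$ for every $z \in \Null(A + \lambda_g^* I)$. I would then pick any nonzero such $z$ and take $v := \begin{bmatrix} 0 \\ z \end{bmatrix}$; both block equations hold trivially, $v \neq 0$, and again $v \in \Null M(\lambda_g^*)$.

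The only nontrivial step is spotting the ansatz $(x_g^*,\,(A + \lambda_g^* I)^{-1} x_g^*)$ in the nonsingular case; once chosen, verification is a one-line calculation using only stationarity and the active quadratic constraint. I note that the proof nowhere uses $\lambda_g^* \neq 0$, so that hypothesis presumably serves downstream algorithmic purposes (e.g., picking the correct generalized eigenvalue) rather than being needed for this lemma.
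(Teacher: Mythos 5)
Your proof is correct and rests on the same key idea as the paper's: exhibiting an explicit null vector of $M(\lambda_g^*)$ built from the KKT point $x_g^*$, using stationarity \eqref{4} and $\|x_g^*\|^2=\delta$. The only real difference is packaging: with $D=A+\lambda_g^*I$, the paper writes $M(\lambda_g^*)$ as the congruence $\Diag(I,D)\,N\,\Diag(I,D)$ and puts $(x_g^*,x_g^*)$ in $\Null(N)$, which disposes of the singular-$D$ case for free (the outer factors are then themselves singular), whereas you verify the two block equations directly and therefore need the separate case resting on $a\in\Range(D)=\Null(D)^\perp$; in the nonsingular case your null vector $(x_g^*,\,D^{-1}x_g^*)$ is exactly the paper's vector pushed through that factorization. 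Your closing observation is also consistent with the paper: its proof never uses $\lambda_g^*\neq 0$ either, only stationarity and activity of the norm constraint.
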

\begin{proof}
The Lemma is proved in \cite{AdachiIwataNakatsukasaTakeda:15}. We 
include a shorter proof.

For simplicity we denote $D=A+\lambda I$ and
let $\lambda=\lambda^*_g$ be a Lagrange multiplier  satisfying (\ref{4}).
We can rewrite (with $x=x_g^*$)
\begin{equation}
	\label{eq:detMfact}
\begin{bmatrix}I & 0 \\ 0  & D \end{bmatrix}
\begin{bmatrix}-I & I\\I& -\frac1 {\delta}{xx^T}\end{bmatrix}
\begin{bmatrix}I & 0 \\ 0  & D \end{bmatrix} =
\begin{bmatrix}I & 0 \\ 0  & D \end{bmatrix}
\begin{bmatrix}-I & D\\I& -\frac1 {\delta}{xx^T}D\end{bmatrix}
=M(\lambda).
\end{equation}
The result follows by observing that the vector 
$0\neq \begin{pmatrix}x \cr  x  \end{pmatrix}\in \Null 
\left(\begin{bmatrix}-I & I\\I& -\frac1
{\delta}{xx^T}\end{bmatrix}\right)$.
\end{proof}
\begin{corollary}
	The set of real generalized eigenvalues of $M(\lambda)$ is
	nonempty.
	Moreover, if $\det M(\lambda)=0, \lambda \in \R$, then either
	$-\lambda$ is an eigenvalue of $A$ or
	\[
	\det\left(\begin{bmatrix}-I & I\\I& -\frac1 {\delta}{xx^T}\end{bmatrix}
	\right)=0, \quad x=-(A+\lambda I)^{-1} a.
	\]
\end{corollary}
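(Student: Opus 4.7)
The plan is to derive both claims from the factorization \eqref{eq:detMfact} used in the proof of Lemma \ref{lem:geneigtrs}. Writing $D=A+\lambda I$, whenever $D$ is invertible we may set $x=-D^{-1}a$ and the identity
\[
\begin{bmatrix}I & 0 \\ 0 & D\end{bmatrix}
\begin{bmatrix}-I & I \\ I & -\tfrac{1}{\delta}xx^T\end{bmatrix}
\begin{bmatrix}I & 0 \\ 0 & D\end{bmatrix} = M(\lambda)
\]
gives $\det M(\lambda)=\det(D)^2\cdot\det K(\lambda)$, where $K(\lambda)$ denotes the middle block. The ``Moreover'' part is then immediate: if $\det M(\lambda)=0$ and $-\lambda$ is not an eigenvalue of $A$, then $\det(D)^2\neq 0$ and so $\det K(\lambda)$ must vanish, which is exactly the second alternative in the statement.

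For the nonemptiness of real generalized eigenvalues I first compute $\det K(\lambda)$ in closed form. Using the block Schur complement with the invertible top-left block $-I$,
\[
\det K(\lambda) = (-1)^n \det\!\bigl(I-\tfrac{1}{\delta}xx^T\bigr)=(-1)^n\bigl(1-\tfrac{\|x\|^2}{\delta}\bigr),
\]
so that $\det K(\lambda)=0$ is equivalent to $\phi(\lambda):=\|(A+\lambda I)^{-1}a\|^2=\delta$. The rest of the argument produces such a $\lambda$ by splitting into the non-hard and hard cases.

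If the hard case fails, some eigenvector $q_1$ of $\lambda_1$ has $a^Tq_1\neq 0$, and the term $(q_1^Ta)^2/(\lambda_1+\lambda)^2$ in the spectral sum \eqref{eh} drives $\phi(\lambda)\to\infty$ as $\lambda\to(-\lambda_1)^+$, while $\phi(\lambda)\to 0$ as $\lambda\to\infty$. Continuity of $\phi$ on $(-\lambda_1,\infty)$ and the intermediate value theorem then give some $\lambda^*\in(-\lambda_1,\infty)$ with $\phi(\lambda^*)=\delta$, whence $\det M(\lambda^*)=0$ by the factorization above.

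If instead the hard case holds, then $a$ is orthogonal to the whole $\lambda_1$-eigenspace and the factorization cannot be invoked at $\lambda=-\lambda_1$ since $D$ is singular there. I would instead exhibit a null vector of $M(-\lambda_1)$ directly: for any unit eigenvector $q_1$ of $\lambda_1$ the block vector $w=(0,\,q_1)^T$ satisfies $M(-\lambda_1)w=\bigl((A-\lambda_1 I)q_1,\,-\tfrac{1}{\delta}(a^Tq_1)a\bigr)^T=0$, since $(A-\lambda_1 I)q_1=0$ and $a^Tq_1=0$; hence $-\lambda_1$ is a real generalized eigenvalue. The main obstacle is precisely this coupling: the clean factorization-based argument presupposes $D$ invertible, while the hard case forces one to handle the singularity at $\lambda=-\lambda_1$ by a separate, direct null-vector calculation. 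No single step is technically deep once \eqref{eq:detMfact} is in hand.
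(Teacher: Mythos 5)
Your argument is correct, and the ``Moreover'' part coincides with the paper's: both deduce $\det M(\lambda)=\det(A+\lambda I)^2\det K(\lambda)$ from the congruence \eqref{eq:detMfact} and conclude that $\det K(\lambda)=0$ whenever $A+\lambda I$ is nonsingular. Where you diverge is the nonemptiness claim. The paper gets it in one line from Lemma \ref{lem:geneigtrs}: under the standing hypotheses ($\lambda_1<0$, so the global \TRS minimizer lies on the boundary with multiplier $\lambda_g^*\geq-\lambda_1>0$ by Theorem \ref{thm:globalTRS}), that multiplier is already a real generalized eigenvalue of $M(\lambda)$. You instead give a self-contained analytic argument: the Schur-complement formula $\det K(\lambda)=(-1)^n\bigl(1-\phi(\lambda)/\delta\bigr)$ plus the intermediate value theorem on $(-\lambda_1,\infty)$ in the non-hard case, and the explicit null vector $(0,q_1)^T$ of $M(-\lambda_1)$ in the hard case. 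What this buys you is independence from the optimization machinery and from Assumptions \ref{assum:slater}--\ref{assum:indefhard}: your proof of nonemptiness does not require $\lambda_1<0$ or the existence of a boundary optimizer, and it also covers the hard case that the paper's Assumption \ref{assum:indefhard} excludes. The cost is length; the paper's citation of Lemma \ref{lem:geneigtrs} is shorter but valid only within its standing setup. Both are sound.
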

\begin{proof} This follows immediately from Lemma \ref{lem:geneigtrs} and from
\eqref{eq:detMfact} in its proof.
\end{proof}
The following theorem shows that the global optimal solution of \TRS can be obtained  via computing an eigenpair of the pencil $
M(\lambda)$.
\begin{theorem}[\textdef{Eigenvalue Characterization of \TRSp}, \cite{AdachiIwataNakatsukasaTakeda:15}]\label{t6}
	Let $(x_g^{*},\lambda_g^{*})$ be a global optimal solution of
	\TRS with $||x_g^{*}||^2=\delta$. Then the multiplier $\lambda_g^{*}$ is equal to the largest real eigenvalue of $M(\lambda)$. Furthermore, 
	if $\lambda_g^{*}> -\lambda_{1}$, then $x_g^{*}$ can be obtained by
$ x_g^{*}=-\frac{\delta}{a^Ty_2}y_1,$
where $\begin{bmatrix}y_1\\y_2\end{bmatrix}\in \Rn \times \Rn$ is an eigenvector for $M(\lambda_g^{*})$ and also we have $ a^Ty_2\neq 0$.
	\qed
\end{theorem}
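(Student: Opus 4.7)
The plan is to combine Lemma \ref{lem:geneigtrs}, which already places $\lambda_g^*$ among the real generalized eigenvalues of $M(\lambda)$, with strict monotonicity of the secular function $\phi$ on $(-\lambda_1,\infty)$ to show that $\lambda_g^*$ is the \emph{largest} such eigenvalue. The formula for $x_g^*$ will then read directly off the null-space equations of $M(\lambda_g^*)$.

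For maximality I argue by contradiction. Suppose $\mu\in\R$ is a generalized eigenvalue of $M(\lambda)$ with $\mu>\lambda_g^*$ and let $\begin{bmatrix}y_1\\y_2\end{bmatrix}\neq 0$ be a corresponding eigenvector. Condition \eqref{7} gives $\lambda_g^*\geq -\lambda_1$, so $\mu>-\lambda_1$ and $D:=A+\mu I\succ 0$ is invertible. The two block rows of $M(\mu)\begin{bmatrix}y_1\\y_2\end{bmatrix}=0$ read $y_1=Dy_2$ and $Dy_1=\tfrac{1}{\delta}a(a^Ty_2)$, which combine to $D^2y_2=\tfrac{a^Ty_2}{\delta}\,a$. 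If $a^Ty_2$ vanished, invertibility of $D^2$ would force $y_2=0$ and hence $y_1=0$, contradicting the eigenvector being nonzero; thus $a^Ty_2\neq 0$. A short computation using $y_1=Dy_2$ then shows that $x:=-\tfrac{\delta}{a^Ty_2}\,y_1$ satisfies $Dx=-a$ and $\|x\|^2=\delta$, so $\phi(\mu)=\delta$. But from \eqref{eh} we have $\phi'(\lambda)<0$ on $(-\lambda_1,\infty)$ (since $a\neq 0$), so $\phi$ is strictly decreasing there, while $\lim_{\lambda\downarrow -\lambda_1}\phi(\lambda)\leq \delta$: either $\lambda_g^*>-\lambda_1$ and $\phi(\lambda_g^*)=\|x_g^*\|^2=\delta$, or $\lambda_g^*=-\lambda_1$ (hard case) and the standard decomposition of $x_g^*$ along $\Range(A+\lambda_g^*I)\oplus \Null(A+\lambda_g^*I)$ together with $\|x_g^*\|^2=\delta$ yields $\phi(-\lambda_1^+)\leq\delta$. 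Strict monotonicity then forces $\phi(\mu)<\delta$, a contradiction.

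The second assertion falls out of the same calculation run forward. Assuming $\lambda_g^*>-\lambda_1$, let $\begin{bmatrix}y_1\\y_2\end{bmatrix}$ be any eigenvector of $M(\lambda_g^*)$. The argument above yields $a^Ty_2\neq 0$ along with $(A+\lambda_g^*I)x=-a$ and $\|x\|^2=\delta$ for $x:=-\tfrac{\delta}{a^Ty_2}\,y_1$. Since $A+\lambda_g^*I\succ 0$ makes the stationarity equation $(A+\lambda_g^*I)x=-a$ uniquely solvable, $x=x_g^*$, and the displayed formula holds.

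The main obstacle I anticipate is the hard-case boundary $\lambda_g^*=-\lambda_1$, where the factorization behind Lemma \ref{lem:geneigtrs} involves a singular $D$ and $x_g^*$ is not literally $-D^{-1}a$. The monotonicity argument sidesteps this because it never uses $\phi(\lambda_g^*)$ itself, only the one-sided limit $\phi(-\lambda_1^+)\leq \delta$, which the standard TRS hard-case decomposition of $x_g^*$ delivers immediately; once that bound is in hand, the rest of the proof proceeds uniformly.
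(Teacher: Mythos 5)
Your argument is correct. Note first that the paper itself gives no proof of Theorem \ref{t6}: it is quoted from \cite{AdachiIwataNakatsukasaTakeda:15} and closed with a \qed, so there is no in-paper argument to compare against line by line. What you supply is a complete, self-contained proof along the same lines as the original reference: Lemma \ref{lem:geneigtrs} puts $\lambda_g^*$ among the real generalized eigenvalues; for any real eigenvalue $\mu>-\lambda_1$ the two block equations $y_1=(A+\mu I)y_2$ and $(A+\mu I)y_1=\tfrac{1}{\delta}a(a^Ty_2)$ force $a^Ty_2\neq 0$ and produce a stationary point on the sphere, hence $\phi(\mu)=\delta$; and strict monotonicity of the secular function $\phi$ on $(-\lambda_1,\infty)$ (immediate from \eqref{eh} since $a\neq 0$) rules out any such $\mu$ above $\lambda_g^*$. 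You also correctly isolate the only delicate point, the boundary $\lambda_g^*=-\lambda_1$, and dispose of it via the one-sided bound $\phi(-\lambda_1^+)=\|(A-\lambda_1 I)^\dagger a\|^2\le\|x_g^*\|^2=\delta$ coming from the orthogonal hard-case decomposition of $x_g^*$; the recovery formula for $x_g^*$ when $\lambda_g^*>-\lambda_1$ then follows from uniqueness of the solution of the (now nonsingular) stationarity equation. Two cosmetic remarks: the blanket phrase ``$\lim_{\lambda\downarrow-\lambda_1}\phi(\lambda)\le\delta$'' is only true in the hard-case branch (in the easy case the limit is $+\infty$), but your case split never actually uses it outside that branch; and Lemma \ref{lem:geneigtrs} as stated excludes $\lambda_g^*=0$, though the determinant factorization \eqref{eq:detMfact} behind it does not, so that edge case is also covered.
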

Theorem \ref{t6} establishes that the largest real eigenvalue of
$M(\lambda)$ is the Lagrange multiplier associated with the global
optimal solution of \TRSp. In the following theorem, we prove that if
\TRS has a \LNGMp, then the corresponding Lagrange multiplier is  the second largest real eigenvalue of $M(\lambda) $. This is the main result of this section.
\begin{theorem}[Eigenvalue Characterization of \LNGMp]
	\label{mainlngmpeig}
	Let $x^{*}$ be a \LNGMp. Then the corresponding Lagrange multiplier $\lambda^{*}$ is equal to the second largest real eigenvalue of $M(\lambda)$. Moreover, $x^{*}$ can be computed as $x^{*}=-\frac{\delta}{a^Ty_2}y_1,$
where $\begin{bmatrix}y_1\\y_2\end{bmatrix}$ is an eigenvector for $M(\lambda^{*})$ and we also have $a^Ty_2\neq 0$.
\end{theorem}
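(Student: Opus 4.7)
The plan is to establish three things in sequence: (i) that $\lambda^*$ is a real generalized eigenvalue of $M(\lambda)$; (ii) that no real generalized eigenvalue of $M(\lambda)$ lies strictly between $\lambda^*$ and the global multiplier $\lambda_g^*$; and (iii) that the kernel of $M(\lambda^*)$ is one-dimensional with the structure that yields the claimed formula for $x^*$. Step (i) is essentially immediate from Lemma \ref{lem:geneigtrs}: Theorem \ref{thm:lngmmain} together with footnote \ref{foot:pos} give $\lambda^*>0$, $\|x^*\|^2=\delta$, and $(A+\lambda^*I)x^*=-a$, so $\det M(\lambda^*)=0$.

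For (ii), I would first separate $\lambda^*$ from $\lambda_g^*$: Theorem \ref{thm:lngmmain} gives $\lambda^*<-\lambda_1$, while the semidefiniteness condition in Theorem \ref{thm:globalTRS} forces $\lambda_g^*\geq -\lambda_1$, hence $\lambda^*<\lambda_g^*$. I would then rule out any other real generalized eigenvalue $\tilde\lambda\in(\lambda^*,\lambda_g^*)$ by splitting on whether $A+\tilde\lambda I$ is singular. When it is nonsingular, the Corollary to Lemma \ref{lem:geneigtrs} reduces the condition $\det M(\tilde\lambda)=0$ to $\phi(\tilde\lambda)=\delta$; on $(-\lambda_1,\infty)$ the formulas \eqref{eh} show $\phi$ is strictly decreasing from $+\infty$ to $0$, so $\lambda_g^*$ is its unique root there, while on $(\max\{0,-\lambda_2\},-\lambda_1)$ Theorem \ref{lngmone} singles out $\lambda^*$ as the largest root. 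The only remaining possibility is $\tilde\lambda=-\lambda_j$ for some $j$; since $\lambda^*>\max\{0,-\lambda_2\}$ forces $-\lambda_j\leq -\lambda_2<\lambda^*$ for $j\geq 2$, the sole candidate inside the interval is $\tilde\lambda=-\lambda_1$.

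The main obstacle is excluding $\tilde\lambda=-\lambda_1$, and this is exactly where Assumption \ref{assum:indefhard} (non-hard case, $a^Tq_1\neq 0$, and $\lambda_1<\lambda_2$) bites. I would attack $M(-\lambda_1)\begin{bmatrix}y_1\\y_2\end{bmatrix}=0$ via the block equations $y_1=(A-\lambda_1 I)y_2$ and $(A-\lambda_1 I)^2y_2=\tfrac{1}{\delta}(a^Ty_2)a$. Taking the inner product of the second equation with $q_1$ and using $(A-\lambda_1 I)q_1=0$ yields $(a^Ty_2)(a^Tq_1)=0$; since $a^Tq_1\neq 0$, this forces $a^Ty_2=0$ and hence $(A-\lambda_1 I)^2y_2=0$. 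Because $A-\lambda_1 I$ is positive definite on the orthogonal complement of $\spanl(q_1)$ (using $\lambda_1<\lambda_2$), the component of $y_2$ off $q_1$ vanishes, so $y_2\in\spanl(q_1)$; then $y_1=0$, and combining $a^Ty_2=0$ with $a^Tq_1\neq 0$ gives $y_2=0$ too, contradicting $(y_1,y_2)\neq 0$.

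For (iii), since $\lambda^*$ avoids every pole $-\lambda_j$, the matrix $A+\lambda^*I$ is invertible. The kernel equations of $M(\lambda^*)$ reduce to $y_2=c(A+\lambda^*I)^{-2}a$ with $c:=\tfrac{1}{\delta}(a^Ty_2)$, and the consistency condition is $\phi(\lambda^*)=\delta$, which holds by Theorem \ref{thm:lngmmain}. A nonzero eigenvector forces $c\neq 0$, giving $a^Ty_2=c\delta\neq 0$; computing $y_1=c(A+\lambda^*I)^{-1}a=-cx^*$ and solving yields $x^*=-\frac{\delta}{a^Ty_2}y_1$, as claimed.
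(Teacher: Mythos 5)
Your proof is correct and follows the same overall strategy as the paper: use Lemma \ref{lem:geneigtrs} to place $\lambda^*$ among the real generalized eigenvalues, reduce the nonsingular case to root-counting for $\phi(\lambda)=\delta$ on $(\max\{0,-\lambda_2\},-\lambda_1)$ and $(-\lambda_1,\infty)$ via Theorems \ref{lngmone} and \ref{t6}, and recover $x^*$ from the blocks of the eigenvector exactly as the paper does. The one place you genuinely improve on the paper is the exclusion of $\tilde\lambda=-\lambda_1$: the paper disposes of this with the assertion that ``Lemma \ref{lem:geneigtrs} implies that $-\lambda_1$ is not an eigenvalue of $M(\lambda)$,'' which is not what that lemma says (it runs in the opposite direction, from multipliers to eigenvalues). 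Your argument --- pairing $(A-\lambda_1 I)^2y_2=\tfrac1\delta(a^Ty_2)a$ against $q_1$ to force $a^Ty_2=0$ from $a^Tq_1\neq 0$, then using simplicity of $\lambda_1$ to collapse $y_2$ into $\spanl(q_1)$ and derive $y_1=y_2=0$ --- supplies the missing justification, and your observation that $-\lambda_j<\lambda^*$ for $j\geq 2$ cleanly rules out the other singular points. The eigenvector recovery in your step (iii) is equivalent to the paper's computation, just organized as a derivation of the kernel's form rather than a verification of the candidate $-\frac{\delta}{a^Ty_2}y_1$.
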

\begin{proof}
From Theorem \ref{thm:lngmmain} we have
$\lambda^* \in (\max\{0,-\lambda_2\},-\lambda_1)$. Moreover,
$||x^*||^2=\delta$ and it follows from Lemma \ref{lem:geneigtrs} that
$\lambda^*$  is an eigenvalue of $M(\lambda)$, i.e. $\det
M(\lambda^*)=0$. We know that the hard case does not hold, see Corollary
\ref{cor:hardcase}. Therefore, by Theorem \ref{t6} and the optimality
conditions in \eqref{eq:optTRS}, we get
that the largest real eigenvalue of $M(\lambda)$ is the unique
multiplier associated with  the global optimal solution of \TRS and is the
unique root of equation $\phi(\lambda)-\delta=0$  in  $(-\lambda_1, \infty)$.
Moreover, it follows from Theorem \ref{lngmone}  that $\lambda^*$, the
multiplier corresponding to the \LNGMp, is positive and is
the largest root of the equation $\phi(\lambda)-\delta=0$  in $(-\lambda_2, -\lambda_1)$. 
Next, note that Lemma \ref{lem:geneigtrs} implies that
$-\lambda_1$ is not an eigenvalue of $M(\lambda)$. 
From the interval considerations for the optimum of \TRS and \eTRSp,
this establishes 
 that $\lambda^*$ is the second largest real eigenvalue of $M(\lambda)$. 

 Now let $\begin{pmatrix} y_1\\y_2\end{pmatrix}$ be an eigenvector for 
$\lambda^*$ for $M(\lambda)$. We have
 \begin{align}
	 &(A+\lambda^* I)y_2=y_1,\label{13}\\
  &(A+\lambda^* I)y_1=\frac 1{\delta}  aa^T  y_2. \label{14}
 \end{align}
 We first show that $a^Ty_2 \neq 0$. Suppose by contradiction that
 $a^Ty_2=0$. Then, since $(A+\lambda^* I)$ is nonsingular, we obtain
 first that $y_1=0$ from the second equation which then implies $y_2=0$
 from the first equation, i.e.,~we have 
 $y_1=y_2=0$,
 a contraction of the fact that $\begin{pmatrix} y_1\\y_2\end{pmatrix}$ is an 
	 eigenvector. Hence, $a^Ty_2 \neq 0$. Thus, (\ref{14}) implies
	 that $x^*= \frac{-\delta}{a^Ty_2}y_1$ satisfies
 \begin{align}\label{phi}
  (A+\lambda^* I)x^*=-a.
  \end{align}
 Moreover, we have
 $$||x^*||^2=\frac{\delta^2}{(a^Ty_2)^2}y_1^Ty_1=\frac{\delta^2}{(a^Ty_2)^2}y_2^T(A+\lambda^*
 I)(A+\lambda^* I)^{-1}\frac{aa^T}{\delta}y_2=\delta.$$
\end{proof}

\section{Strong Duality and Stability for \eTRSp}
\label{sect:charetrs}
\subsection{Characterization of Strong Duality for \eTRSp}
\label{sect:charact}
 A necessary and sufficient condition for strong duality of the problem
 of minimizing  a quadratic function over two quadratic inequality
 constraints, when one of them is  strictly convex, is presented in
 \cite{MR2486048}. 
 Since  \eTRS is a  special case, we have the following.
 
 \begin{theorem}[Characterization Strong Duality \eTRSp]
\label{t3}
Strong duality fails for  \eTRS if, and only if, there exist multipliers 
$\lambda, \mu $ such that the following hold:
\begin{enumerate}
	\item 
		\label{item:mults}
		$\lambda>0$ and $\mu >0$;
\item  
		\label{item:semidefs}
	$A+\lambda I\succeq 0$, and $\Rank (A+\lambda I)=n-1$;
\item  
		\label{item:twosystms}
	The following system of linear equations is consistent.
	\begin{equation}
		\label{e1}
	2(A+\lambda I)x_i=-2a-\mu b, \,
		x_i^Tx_i=\delta, \,i=1,2, \qquad (b^Tx_1-\beta)(b^Tx_2-\beta)<0.
	\end{equation}
\end{enumerate}
\end{theorem}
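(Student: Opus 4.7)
The plan is to obtain this characterization as a direct specialization of the general strong duality result for minimizing a quadratic over two quadratic inequality constraints proved in \cite{MR2486048}. That result assumes at least one of the two constraints is strictly convex, which is satisfied here because $g(x)=\|x\|^2-\delta$ has Hessian $2I\succ 0$, while the affine constraint $\ell(x)=b^Tx-\beta$ is trivially (non-strictly) convex. The dictionary between the two statements is: ambient quadratic $q$, strictly convex constraint $g$ with multiplier $\lambda$, and second constraint $\ell$ with multiplier $\mu$.

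Next I would translate the abstract conditions into the data of \eTRSp. The Lagrangian $L(x,\lambda,\mu)=q(x)+\lambda g(x)+\mu\ell(x)$ gives stationarity $2(A+\lambda I)x=-2a-\mu b$ and Hessian $2(A+\lambda I)$. The characterization in \cite{MR2486048} asks for multipliers $\lambda,\mu\ge 0$ such that the Lagrangian Hessian is positive semidefinite with a one--dimensional kernel, and such that the stationarity equation admits two distinct solutions $x_1,x_2$ lying on the boundary $\|x\|^2=\delta$ of the strictly convex constraint with the second constraint taking values of opposite sign at these two points. Items (\ref{item:semidefs}) and (\ref{item:twosystms}) of the theorem then follow verbatim.

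The one substantive point left is the strict positivity (\ref{item:mults}). If $\mu=0$, the dual of \eTRS collapses to the dual of \TRSp, for which strong duality is guaranteed by Theorem \ref{thm:globalTRS}, contradicting the assumed gap. If $\lambda=0$, the psd condition forces $A\succeq 0$, in which case \eTRS is a convex program and strong duality holds under the Slater condition in Assumption \ref{assum:slater}, again a contradiction. Complementary slackness with $\lambda>0$ then pins both $x_i$ to $\|x_i\|^2=\delta$, and $\mu>0$ combined with $(b^Tx_1-\beta)(b^Tx_2-\beta)<0$ records that exactly one of the two Lagrangian stationary points is primal feasible while both share the common Lagrangian value $q(x_i)-\lambda\delta-\mu\beta$; this mismatch is precisely the mechanism producing the gap.

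I expect the main obstacle to be not reproving the general theorem of \cite{MR2486048}, but rather carefully verifying the dictionary, especially that the rank--one kernel of $A+\lambda I$, the sphere equality $\|x_i\|^2=\delta$, and the opposite--sign condition all emerge cleanly when one specializes to the affine second constraint. In particular, once the Hessian has a one--dimensional kernel and the stationarity equation is consistent, its solution set is an affine line on which $\|x\|^2$ is a strictly convex univariate quadratic, so $\|x\|^2=\delta$ has at most two real roots, which explains the ``exactly two'' structure of $x_1,x_2$ appearing in (\ref{e1}).
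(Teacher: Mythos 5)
Your proposal is correct and follows essentially the same route as the paper: the theorem is obtained by specializing the two-quadratic-constraint strong-duality characterization of \cite[Thm 5.2]{MR2486048}, noting that the affine constraint is a (non-strictly convex) quadratic and that the norm constraint supplies the required strictly convex one. The extra details you supply (the dictionary of multipliers, the argument that $\lambda,\mu>0$, and the two-root structure on the kernel line) are consistent elaborations of what the paper leaves to the citation.
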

\begin{proof}
This follows immediately from the characterization in 
{\cite[Thm 5.2]{MR2486048}} for two quadratic constraints, since the
affine constraint is a special case of a quadratic constraint.
\end{proof}
It is interesting to translate this theorem under our special
assumptions and the language of the \emph{hard case}. In fact, we see
that loss of strong duality is directly connected to the  hard case in
\TRSp. Note that the hard case is identified by obtaining a feasible
solution that satisfies all the optimality conditions except for
complementary slackness.
\begin{corollary}
	\label{cor:hardcasemu}
	Consider the \textdef{Lagrangian dual of \eTRS} in parametric
	form.
\[
	\textdef{$d^*_{e\TRS}$}:=\max_{\mu \geq 0} g(\mu),
\]
where the \textdef{dual function, $g(\mu)$}, with $\lambda$ implicit in $g$,
is a \textdef{parametric \TRSp, \TRSmup},
\begin{equation}
	\label{TRSmu}
	\tag{\TRSmu}
	 \textdef{$g(\mu)$}:=
	\max_{\lambda \geq 0} \min_x \big[L(x,\lambda)+\mu
	b^Tx\big]-\mu\beta
\end{equation}
Then strong duality fails for \eTRS if, and only if,  
there exists $\mu> 0$ such that the  parametrized \TRSmu
has a \underline{hard case} solution $x_{\mu}^*$ that satisfies all
the optimality conditions except for complementary slackness, i.e.,
\[
	\|x_{\mu}^*\|^2<\delta, \quad b^Tx_{\mu}^*=\beta.
\]
\end{corollary}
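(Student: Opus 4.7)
The plan is to translate each of the three numbered conditions of Theorem~\ref{t3} into the language of the parametric inner problem \TRSmu. The Lagrangian of \TRSmu has gradient $2(A+\lambda I)x + 2a + \mu b$, so the stationarity equation for \TRSmu reads $(A+\lambda I)x = -a - \tfrac{\mu}{2}b$, matching the first equation of \eqref{e1}. Conditions~\ref{item:mults} and~\ref{item:semidefs} of Theorem~\ref{t3} say $\lambda = -\lambda_1 > 0$ with $A+\lambda I \succeq 0$ and $\Null(A+\lambda I)$ one-dimensional; together with the consistency of the stationarity equation, this is exactly the statement that the inner \TRSmu problem is in the \emph{hard case}. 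The remaining feature to reproduce is the relation between the two sphere points $x_1, x_2$ of item~\ref{item:twosystms} and the single interior point $x_\mu^*$ appearing in the corollary.

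For the forward direction, I would invoke Theorem~\ref{t3} to obtain $\lambda, \mu > 0$ and the two solutions $x_1, x_2$ of \eqref{e1}. Parametrize the line segment $x(t) = (1-t)x_1 + t x_2$, $t \in [0,1]$; by linearity, every point on this segment still satisfies the stationarity $(A+\lambda I)x(t) = -a - \tfrac{\mu}{2}b$. The affine function $t \mapsto b^Tx(t) - \beta$ changes sign by the strict inequality $(b^Tx_1 - \beta)(b^Tx_2-\beta) < 0$, so the intermediate value theorem yields $t^* \in (0,1)$ with $b^Tx(t^*)=\beta$; set $x_\mu^* := x(t^*)$. Since $x_1 \neq x_2$ both lie on the sphere $\|x\|^2 = \delta$ and the Euclidean ball is strictly convex, $\|x_\mu^*\|^2 < \delta$. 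Thus $x_\mu^*$ satisfies all \TRSmu optimality conditions except complementary slackness for the ball, and is a hard-case solution in the required sense.

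For the backward direction I would reverse the construction by perturbing $x_\mu^*$ along the null space. Choose a nonzero $v \in \Null(A+\lambda I)$; stationarity is preserved, so $x_\mu^* + sv$ solves $(A+\lambda I)x = -a - \tfrac{\mu}{2}b$ for every $s \in \R$. The equation $\|x_\mu^* + sv\|^2 = \delta$ is a quadratic in $s$ whose product of roots equals $(\|x_\mu^*\|^2 - \delta)/\|v\|^2 < 0$, hence it has two real roots $s_1, s_2$ of opposite signs. Setting $x_i := x_\mu^* + s_i v$ furnishes both sphere-points of item~\ref{item:twosystms}, and
\[
(b^Tx_1 - \beta)(b^Tx_2 - \beta) = s_1 s_2 (b^Tv)^2,
\]
so the strict inequality follows provided $b^Tv \neq 0$. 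The rank-one condition of item~\ref{item:semidefs} simply says that $v$ is unique up to scaling.

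The main obstacle is the degenerate subcase $b^Tv = 0$, i.e., the linear functional being orthogonal to the null direction. In that case, either point $x_i = x_\mu^* + s_i v$ would satisfy $\|x_i\|^2 = \delta$, $b^Tx_i = \beta$, and the full KKT system for \eTRS with multipliers $\lambda, \mu > 0$ and $A + \lambda I \succeq 0$, which by the second-order sufficient condition would make $x_i$ a global minimizer and imply strong duality. Hence, whenever strong duality genuinely fails, $v$ is automatically transverse to $b$, closing the iff and reconciling the stated "hard case solution" with the rank-one transversality that item~\ref{item:semidefs} of Theorem~\ref{t3} encodes.
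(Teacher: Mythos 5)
Your argument follows the same route as the paper's: reduce to Theorem~\ref{t3}, obtain the interior point $x_\mu^*$ as the convex combination of $x_1,x_2$ at which $b^Tx=\beta$ (with $\|x_\mu^*\|^2<\delta$ from strict convexity of the ball), and recover $x_1,x_2$ from $x_\mu^*$ by perturbing along $\Null(A+\lambda I)$. Your forward direction is complete and is in fact more explicit than the paper's, which compresses both directions into the single phrase ``this is equivalent to finding the convex combination.''

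The loose end is in your backward direction. You correctly isolate the obstruction $b^Tv=0$, but your resolution --- ``whenever strong duality genuinely fails, $v$ is automatically transverse to $b$'' --- is the contrapositive of the forward implication, not a completion of the backward one. What must be ruled out is the coexistence of the corollary's hypothesis (a hard-case, non-complementary solution of \TRSmu with $b^Tx_\mu^*=\beta$, $\|x_\mu^*\|^2<\delta$) with $b\perp\Null(A+\lambda I)$: in that configuration your own computation shows strong duality \emph{holds}, so the stated ``if'' direction would fail. The way to close it is through the consistency condition you already wrote down: existence of a hard-case solution requires $v^T(2a+\mu b)=0$, so $v^Tb=0$ would force $v^Ta=0$, i.e.\ the original \TRS would itself be in the hard case --- excluded by Assumption~\ref{assum:indefhard}, which also gives the simple smallest eigenvalue and hence $\Rank(A+\lambda I)=n-1$ needed for item~\ref{item:semidefs} (a point you assert rather than derive). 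The paper's proof of the corollary is silent on this as well and only disposes of the $v^Tb=0$ case separately in Section~\ref{sect:muvalues}; with one added sentence invoking consistency together with those assumptions, your proof is complete and tighter than the published one.
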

\begin{proof}
Since \eTRS is a convex problem if $\lambda_1 \geq 0$, without loss of generality
we assume that $\lambda_1<0$. We conclude that the optimal Lagrange
multiplier for \TRSmu satisfies $\lambda > 0$ and moreover there
exists an optimal solution on the boundary of the trust region.

The three conditions in Theorem \ref{t3} are equivalent to the
optimality conditions for the parametrized problem at $\mu$. And the two
points $x_i, i=1,2$ are on opposite sides of the affine manifold for the
linear constraint. We note that necessarily $0\neq 
v:= x_1-x_2\in \Null(A+\lambda I)$. Therefore $v$ is the required
eigenvector and this is equivalent to
finding the convex combination
$x^*=\alpha x_1+ (1-\alpha)x_2, \alpha \in (0,1)$ with $b^Tx^*=\beta$
and necessarily $\|x^*\|<\delta$.

Therefore, the parametrized \TRS has multiple optimal solutions and the
hard case holds for
the corresponding \TRSmup, i.e.,~$2a+\mu b\in \Range(A-\lambda_1 I),
\, \lambda^*=-\lambda_1$.

More details on $\|x_{\mu}^*\|^2<\delta$ and the relation with
the minimum norm solution
 $\hat x:=\frac 12(A-\lambda_{1}I)^\dag(-2a-\mu b)$ are discussed
in Section \ref{sect:muvalues}, where we define the
\textdef{Moore-Penrose generalized inverse, $C^\dag$}.
In fact, necessarily
$\|x_{\mu}^*\|^2= \frac 12(A-\lambda_{1}I)^\dag(-2a-\mu b) +v$ for $v\in \Null
(A-\lambda_{1}I)$. 
\index{$C^\dag$, Moore-Penrose generalized inverse}
\end{proof}
\begin{remark}
Corollary \ref{cor:hardcasemu} illustrates the geometry of strong
duality in terms of the parametrized \TRSmup. If we start with $\mu=0$
and increase $\mu \uparrow$, then the corresponding optimal solution of
\TRSmu moves on the boundary of the trust region. If we encounter the
boundary of the linear constraint first then strong duality holds. On
the other hand if we encounter the hard case at $\mu>0$
and if we can move using the nullspace $\bar x=x_{\mu}+v$ so that
$\|\bar x \|^2<\delta, b^T\bar x =\beta$, then strong duality fails.

This means that given a \TRS we can characterize all the $b,\beta$ where
strong duality would fail using the characterization of the hard case.
\end{remark}

We know that strong duality fails if the \LNGM is the optimum for
\eTRSp.  We now see that it requires a special eigenvalue configuration for
strong duality to fail if the linear constraint is active.
\begin{theorem}
	\label{thm:strdual2eigs}
	Suppose that $x^*$ solves \eTRS with $b^Tx^*=\beta$. 
	Suppose that $\lambda_2<0$. Then strong duality holds for
	\eTRSp.
\end{theorem}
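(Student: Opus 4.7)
The plan is a proof by contradiction: assume strong duality fails for \eTRS and derive a contradiction with the hypothesis $\lambda_{2}<0$.

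By Assumption~\ref{assum:slater} the problem is strictly feasible, so MFCQ holds at $x^{*}$; hence KKT holds with multipliers $\tilde\lambda^{*},\tilde\mu^{*}\geq 0$ and the standard second-order necessary condition applies. A direct Lagrangian computation shows that if $A+\tilde\lambda^{*}I\succeq 0$, then $g(\tilde\lambda^{*},\tilde\mu^{*})=q(x^{*})+\tilde\lambda^{*}(\|x^{*}\|^{2}-\delta)+\tilde\mu^{*}(b^{T}x^{*}-\beta)=p^{*}$ by complementary slackness, so $d^{*}=p^{*}$ and strong duality holds. The contradiction hypothesis therefore forces $A+\tilde\lambda^{*}I\not\succeq 0$, i.e.\ $\tilde\lambda^{*}<-\lambda_{1}$; in particular $\lambda_{1}<0$.

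I would then split on whether the ball constraint is active. In the case $\|x^{*}\|^{2}<\delta$, complementary slackness gives $\tilde\lambda^{*}=0$ and the critical cone contains the $(n-1)$-dimensional hyperplane $b^{\perp}$. Second-order necessity reduces to $v^{T}Av\geq 0$ for every $v\in b^{\perp}$. Applying the Cauchy (Poincar\'e) interlacing inequality to $V^{T}AV$ for an orthonormal basis $V$ of $b^{\perp}$ yields $\lambda_{1}(V^{T}AV)\leq \lambda_{2}(A)$; nonnegativity of the left-hand side forces $\lambda_{2}(A)\geq 0$, contradicting the hypothesis. This is the clean case and the one where the Cauchy-interlacing-plus-second-order mechanism is transparent.

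The remaining case $\|x^{*}\|^{2}=\delta$ is the real work. Now the critical cone contains the $(n-2)$-dimensional subspace $W=\{v:v^{T}x^{*}=0,\ b^{T}v=0\}$ and the second-order condition reads $A+\tilde\lambda^{*}I\succeq 0$ on $W$. Since $\tilde\lambda^{*}<-\lambda_{1}$, one gets $q_{1}^{T}(A+\tilde\lambda^{*}I)q_{1}<0$, so $q_{1}\notin W$, hence $q_{1}\in\textnormal{span}(x^{*},b)$. I would now feed in Theorem~\ref{t3} under strong-duality failure, which forces $\lambda_{1}$ simple, the dual multiplier $\lambda=-\lambda_{1}$, $\mu>0$, and produces explicit witnesses $x_{1},x_{2}$ on the sphere on opposite sides of the hyperplane, together with the interior point $\bar x=\alpha x_{1}+(1-\alpha)x_{2}$ of Corollary~\ref{cor:hardcasemu}. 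The hypothesis $\lambda_{2}<0$ is the ingredient that makes $A+\tilde\lambda^{*}I$ have a \emph{second} negative direction: combined with the PSD-on-$W$ condition this forces $q_{2}\in\textnormal{span}(x^{*},b)$ as well, so all problem data $A$, $a$, $b$, $x^{*}$ collapse into the two-dimensional subspace $\textnormal{span}(q_{1},q_{2})$. Once the problem is reduced to an \eTRS in dimension two, strong duality holds unconditionally (e.g.\ via the S-procedure/Yakubovich argument), yielding the desired contradiction.

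The main obstacle is precisely this two-dimensional reduction in the sphere-active case: pinning down $q_{2}\in\textnormal{span}(x^{*},b)$ from the $(n-2)$-dimensional second-order inequality on $W$ requires combining the inequality with the null-space structure supplied by Theorem~\ref{t3}, and it is here that the hypothesis $\lambda_{2}<0$ is genuinely used. Once the 2D reduction is secured, the base-case strong duality closes the proof.
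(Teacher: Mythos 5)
Your ball--inactive case is correct and is essentially the paper's own mechanism: the paper takes $W$ with $\Range(W)=\Null(b^T)$, uses Cauchy interlacing to get $\lambda_{\min}(W^TAW)\le\lambda_2<0$, concludes that the trust--region constraint must be active at the optimum, and then finishes in one stroke by shifting the Hessian to $A-\lambda_1I$ and invoking strong duality of the resulting convex problem --- no case split on multipliers, no use of Theorem \ref{t3}, and no dimension reduction. Your sphere--active case, however, contains genuine gaps. The inference ``$q_1^T(A+\tilde\lambda^*I)q_1<0$, so $q_1\notin W$, hence $q_1\in\spanl(x^*,b)$'' is a non sequitur: a vector lying outside a subspace need not lie in its orthogonal complement, and positive semidefiniteness of $A+\tilde\lambda^*I$ on the codimension--two subspace $W$ only bounds the number of its negative eigenvalues by two; it does not force eigenvectors (nor $a$) into $\spanl(x^*,b)$. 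Moreover the premise that $\lambda_2<0$ supplies a \emph{second} negative direction of $A+\tilde\lambda^*I$ is unfounded, since nothing prevents $\tilde\lambda^*\ge-\lambda_2$, in which case $A+\tilde\lambda^*I$ has a single negative eigenvalue. So the two--dimensional collapse is not established, as you yourself flag.

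More fatally, the base case you plan to land on is false: strong duality does \emph{not} hold unconditionally for a two--dimensional \eTRSp. Take $A=\Diag(-2,-1)$, $a=(-1,0)^T$, $b=(1,0)^T$, $\beta=0$, $\delta=1$. Then $p^*=-1$, attained at $(0,\pm1)$ where both constraints are active (with $\tilde\lambda^*=1$, $\tilde\mu^*=2$), while the Lagrangian dual value is $-2$, attained at $(\lambda,\mu)=(2,2)$; equivalently, Theorem \ref{t3} certifies the failure with $\lambda=2$, $\mu=2$, $x_{1,2}=(\pm1,0)$. Hence the S--procedure argument you invoke is unavailable in dimension two for this problem class, and the reduction strategy cannot be rescued. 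Note also that this instance satisfies the hypotheses of the statement ($b^Tx^*=\beta$ at the optimum and $\lambda_2=-1<0$) as well as the standing assumptions, which shows that the obstruction you hit in the sphere--active case is intrinsic rather than a bookkeeping issue: closing that case requires more than the stated hypotheses, and indeed the convexification step in the paper's proof implicitly needs the shifted (convex) problem to attain its minimum on the sphere, which this example violates. In short, your first case reproduces the paper's interlacing step, but the second case is not a proof, and no completion along the proposed lines is possible.
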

\begin{proof}
As above, we can construct a full column rank matrix $W$ to represent
$\Null(b^T)$. From interlacing of eigenvalues we get that
$\lambda_{min}(W^TAW)<0$. Therefore, there exists an optimal solution on
the boundary of the trust region for the projected problem,
i.e.,~complementary slackness holds.  This means that the optimum for
\eTRS is also on the boundary of the trust region constraint.  We can
therefore add a multiple of the identity to the Hessian of the original
problem and obtain a
convex equivalent problem. This shows that strong duality holds.
The dual problem is equivalent to perturbing the Hessian to
$Q-\lambda_1 I$ as long as we subtract the constant $\lambda_1 \beta$.
\end{proof}

\subsection{Stability for \eTRSp}
\label{sect:stablty}
We now see that the \eTRS is stable with respect to perturbations in the data.
\index{\LICQp, linear independence constraint qualification}
\begin{lemma}
Recall that we have made Assumptions \ref{assum:slater} and
\ref{assum:indefhard}.
Let $x^*$ be the optimal solution for \eTRSp.  Then the 
\textdef{linear independence constraint qualification, \LICQp}, holds at $x^*$. 
Moreover, $x^*$ is the unique optimal solution if the second 
constraint is inactive. 
Thus unique Lagrange multipliers $\lambda^*_1,\lambda^*_2$
exist for the two constraints, respectively.\footnote{We note 
that the optimum does not have to be unique for
the projected problem, i.e.,~though the hard case does not hold for
\TRSp, it can hold for the projected problem.}
\end{lemma}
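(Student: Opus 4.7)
The plan is to verify \LICQ via a case analysis on the active constraints at $x^*$, then read off existence and uniqueness of the Lagrange multipliers from \LICQ in the standard way, and finally obtain uniqueness of $x^*$ (in the case the linear constraint is inactive) from Theorem \ref{lngmone}.

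For the \LICQ check I would enumerate the possible active sets at $x^*$. If no constraint is active, there is nothing to show. If only the ball constraint is active, then $\|x^*\|^2=\delta>0$ gives $\nabla g(x^*)=2x^*\neq 0$; if only the linear constraint is active, $\nabla \ell(x^*)=b\neq 0$ by hypothesis. The nontrivial case is both constraints active, where \LICQ demands that $x^*$ and $b$ be linearly independent. I would argue by contradiction: assuming $x^*=\alpha b$, activity of the two constraints gives $\alpha^2\|b\|^2=\delta$ and $\alpha\|b\|^2=\beta$, so $\beta^2=\delta\|b\|^2$ and therefore $\beta=\pm\sqrt{\delta}\|b\|$. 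The value $\beta=-\sqrt{\delta}\|b\|$ is excluded by strict feasibility via Lemma \ref{lem:slater}; the remaining value $\beta=+\sqrt{\delta}\|b\|$ makes the linear constraint implied by the trust region (since $b^Tx\le\|b\|\|x\|\le\sqrt{\delta}\|b\|=\beta$ whenever $\|x\|^2\le\delta$), so the global \TRS minimizer would automatically be feasible for \eTRSp and hence optimal, contradicting Assumption \ref{assum:slater}.

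Once \LICQ is established, existence and uniqueness of the KKT multipliers $\lambda^*_1,\lambda^*_2$ at $x^*$ is a standard consequence. For uniqueness of $x^*$ when the linear constraint is inactive, I would observe that $x^*$ is then a local minimizer of \TRSp, and by Assumption \ref{assum:slater} it cannot be the global \TRS minimizer, so it is a \LNGM of \TRSp. Theorem \ref{lngmone} then yields uniqueness since \TRS admits at most one \LNGMp.

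The main obstacle is the both-active subcase of \LICQp, in particular excluding the branch $\beta=+\sqrt{\delta}\|b\|$: the key observation is that this value reduces \eTRS to \TRSp, which then clashes with Assumption \ref{assum:slater}. The remaining cases and the two uniqueness statements are essentially bookkeeping once \LICQ is in hand.
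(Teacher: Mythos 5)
Your proof is correct and follows the same overall strategy as the paper: a case split on the active set, with \LICQ trivial except when both constraints are active, followed by the standard deduction of unique multipliers from \LICQp, and uniqueness of $x^*$ in the inactive case from the at-most-one-\LNGM result (Theorem \ref{lngmone}). The one place where you genuinely diverge is the both-active subcase, and there your argument is actually tighter than the paper's. The paper dismisses this case by asserting that linear dependence of $\{b,2x^*\}$ forces strict feasibility to fail; but your computation shows the dependence only forces $\beta=\pm\sqrt{\delta}\,\|b\|$, and the branch $\beta=+\sqrt{\delta}\,\|b\|$ is perfectly consistent with strict feasibility (Lemma \ref{lem:slater} only excludes the minus sign). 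You close that branch correctly by observing that $\beta=+\sqrt{\delta}\,\|b\|$ makes the linear constraint redundant via Cauchy--Schwarz, so the global \TRS minimizer would be feasible for \eTRSp, contradicting Assumption \ref{assum:slater}. So your proof buys a complete treatment of a subcase the paper handles only by an imprecise geometric appeal; the cost is a slightly longer argument and an explicit reliance on Assumption \ref{assum:slater} rather than on strict feasibility alone.
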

\begin{proof}
Suppose that the second constraint is inactive $b^Tx^*< \beta$.
First we note that the first constraint is active by the $\lambda_1(A)<0$
assumption and the gradients of the active constraint is $2x^* \neq 0$.
Therefore the \LICQ holds. This immediately implies that $\lambda^*_1
>0$ exists. Moreover, both $x^*$ and the optimal Lagrange multiplier 
$\lambda^*$ are unique by the $\lambda_1(A)<\lambda_2(A)$ assumption.

If the second constraint is active $b^Tx^*= \beta$, then
$x^*$ is the optimal solution of the projected problem. If the first
constraint is inactive, we are done as $\{b\}$ is a linearly independent
set.  And it is clear from the geometry that if both constraints are
active then the gradients $\{b,2x^*\}$ are linearly dependent 
only if strict feasibility fails, a contradiction. Therefore, \LICQ
holds and the multipliers are unique. 
\end{proof}
\begin{corollary}
	The \eTRS is a stable problem with respect to perturbations in
	the data.  
\end{corollary}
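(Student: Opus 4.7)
The plan is to deduce stability from a classical parametric sensitivity theorem, such as Robinson's implicit function theorem for KKT systems or the Fiacco--McCormick result. Such theorems assert that at a KKT point $(x^*,\lambda_1^*,\lambda_2^*)$ of a smooth nonlinear program, if \LICQ, strict complementary slackness, and the second-order sufficient condition (\textbf{SOSC}) on the critical cone all hold, then there is a locally Lipschitz (in fact $C^1$ in the smooth-data case) single-valued selection of KKT triples as the problem data vary in a neighbourhood of the nominal instance. Since \eTRS has polynomial (indeed quadratic) data entering the objective and constraints, applying this to the perturbed data $(A,a,b,\delta,\beta)$ will yield the corollary.

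First I would note that \LICQ and uniqueness of $\lambda_1^*,\lambda_2^*$ are provided by the preceding lemma, so only strict complementarity and \textbf{SOSC} remain. For strict complementarity I would do a case split according to which constraints are active at $x^*$. If only the ball constraint is active, then $x^*$ is necessarily a \LNGM (since Assumption \ref{assum:slater} rules out the \TRS global minimizer), so Theorem \ref{thm:lngmmain} together with footnote \ref{foot:pos} gives $\lambda_1^*>0$, and the strict inequality $b^Tx^*<\beta$ together with $\lambda_2^*=0$ closes this case. If only the linear constraint is active, the stationarity equation $(A+\lambda_1^*I)x^*+a+\tfrac{\lambda_2^*}{2}b=0$ with $\lambda_1^*=0$ yields $\lambda_2^*>0$ because $b\neq 0$ and $Ax^*+a\neq 0$ (otherwise $x^*$ would be an interior critical point of $q$, and with $\lambda_1(A)<0$ this would not be a local minimum). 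When both constraints are active, the argument used in the proof of the preceding lemma shows that the gradients $\{2x^*,b\}$ are linearly independent under strict feasibility, and positivity of both multipliers then follows from \LICQ applied to the stationarity relation.

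For \textbf{SOSC} I would work on the critical cone $C(x^*)=\{d:d^T\nabla g_i(x^*)=0\text{ for all active }i\}$ and verify $d^T(A+\lambda_1^*I)d>0$ for $0\neq d\in C(x^*)$. In the \LNGM case, Theorem \ref{thm:lngmmain} gives $V^T(A+\lambda_1^*I)V\succeq 0$ with $\lambda_1^*<-\lambda_1(A)$, and Assumption \ref{assum:indefhard} rules out a null direction, yielding the strict inequality on the tangent space to the sphere. When the linear constraint is active, reducing to the hyperplane via a basis $W$ for $\Null(b^T)$ gives a projected \TRSp, and the argument of Theorem \ref{thm:strdual2eigs} (together with the hard-case exclusion and strict complementarity just verified) gives that $W^T(A+\lambda_1^*I)W$ is positive definite on the relevant tangent subspace.

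The main obstacle is the boundary situation where the projected pencil degenerates, i.e., where the hard case threatens to hold on the hyperplane even though it does not hold for \TRSp; Corollary \ref{cor:hardcasemu} shows this is exactly where strong duality for \eTRS can fail and where $A+\lambda_1^*I$ becomes singular with a null direction lying in $\Null(b^T)$. I would dispatch this case by observing that under Assumption \ref{assum:indefhard} and the uniqueness part of the preceding lemma, such a degenerate configuration requires a nontrivial tangent null vector together with multiplicity in the spectrum of $A$, which is precisely the unstable regime ruled out by the previous lemma; away from it, \textbf{SOSC} holds and the sensitivity theorem applies, giving Lipschitz dependence of $x^*$ and $(\lambda_1^*,\lambda_2^*)$ on the data and hence stability of \eTRSp.
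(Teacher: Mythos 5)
The paper's own proof is a one-line appeal to standard sensitivity theory: the preceding lemma supplies \LICQ and uniqueness of the multipliers, the feasible set is compact, and the conclusion is cited from Fiacco. Your proposal instead tries to verify the full Fiacco--McCormick/Robinson hypotheses, including strict complementarity and a second-order sufficient condition, and two of those steps fail. The decisive one is strict complementarity: your claim that in the both-constraints-active case ``positivity of both multipliers follows from \LICQ applied to the stationarity relation'' is false --- \LICQ gives uniqueness of the multipliers, never their positivity, and an active constraint can carry a zero multiplier under \LICQp. The paper says as much in the remark immediately following this corollary: if the linear constraint is perturbed until it just touches the \LNGMp, it becomes active but redundant, its multiplier is zero, strict complementarity fails, and further perturbation produces a jump in the optimal solution. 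So the very hypothesis you are trying to establish is explicitly refuted by the paper for a nonempty set of instances.

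Your treatment of the degenerate second-order case is likewise contradicted by the paper. The footnote to the preceding lemma states that although the hard case does not hold for \TRSp, it \emph{can} hold for the projected problem; hence a null direction of $W^T(A+\lambda^* I)W$ in the tangent space is not ``ruled out by the previous lemma,'' and Assumption \ref{assum:indefhard} (simplicity of $\lambda_1$ and $a^Tq_1\neq 0$) constrains the spectrum of $A$, not of the projected pencil. In that configuration the strong SOSC you need for a $C^1$ single-valued selection of KKT triples need not hold. The corollary, as the paper proves it, asserts only the weaker stability obtainable from \LICQp, unique multipliers, and compactness of the feasible set; the Lipschitz implicit-function conclusion you are aiming for requires hypotheses (strict complementarity, SOSC on the critical cone) that genuinely fail for some \eTRS instances, so that route cannot be repaired without weakening the conclusion or adding assumptions.
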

\begin{proof}
	This follows from standard results in sensitivity analysis since
	the Lagrange multipliers are unique, satisfy LICQ and the
	feasible set is compact, e.g.,~\cite{Fia:83}.
\end{proof}
\begin{remark}
We note that these results on stability along with standard sensitivity
results on eigenvalue algorithms imply that our approach is a robust
method for solving \eTRSp.

In addition, strict complementarity can fail for \eTRSp. If the \LNGM is
the optimal solution for \eTRSp, then one can perturb the linear
constraint till it becomes active. It is therefore a redundant
constraint illustrating that the corresponding Lagrange multiplier can
be zero. This would then be a degenerate problem and perturbing the
linear constraint further can make the projected trust region optimal
point the optimum for \eTRSp, i.e.,~the result is a jump in the optimal
solution.
\end{remark}

\section{Algorithm and Subproblems for \eTRSp}
\label{sect:globetrs}
We now describe our proposed method to solve \eTRS in 
Algorithm \ref{alg:eTRSalgorithm}.
This finds the global optimal solution for the general problem of \eTRSp.
We include the details about the global minimizer for \TRS and the
details for the subproblems that need to be solved. We do \emph{not}
assume that the global minimizer of \TRS is infeasible in the details of
our algorithm, i.e.,~our algorithm solves the general case.

\begin{lemma}\label{lem:strfaillngm}
	Strong duality fails for the \LNGMp.
\end{lemma}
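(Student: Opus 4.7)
Let $x^*$ denote the LNGM of TRS and assume it is the global optimum of eTRS. By the definition of a LNGM and by Assumption~\ref{assum:slater}, one has $\|x^*\|^2 = \delta$, $b^T x^* < \beta$ strictly, and $p^* := q(x^*) > q(x_g^*)$, where $x_g^*$ is the global TRS minimizer (which is infeasible for the linear constraint by Assumption~\ref{assum:slater}). The plan is to produce an explicit duality gap through weak duality.

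First I would partially dualize only the linear constraint and use TRS strong duality on the inner problem to reduce the Lagrangian dual of eTRS to
\[
d^* \;=\; \sup_{\mu \geq 0}\big[\,g(\mu) - \mu\beta\,\big], \qquad g(\mu) := \min\{\,q(x) + \mu\, b^T x : \|x\|^2 \leq \delta\,\},
\]
where $g$ is finite, concave, and continuous on $[0,\infty)$. Taking $x^*$ as a feasible primal point for the inner TRS gives the weak-duality estimate $g(\mu) - \mu\beta \leq p^* + \mu(b^T x^* - \beta)$ for every $\mu \geq 0$. I would then split on $\mu$: at $\mu = 0$, $g(0) = q(x_g^*) < p^*$ precisely because $x^*$ is a non-global local minimizer of TRS; for every $\mu > 0$, $b^T x^* < \beta$ upgrades the estimate to the strict inequality $g(\mu) - \mu\beta < p^*$.

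Finally, since the upper bound has strictly negative slope $b^T x^* - \beta$, we have $g(\mu) - \mu\beta \to -\infty$ as $\mu \to \infty$; continuity of $g$ then forces the supremum defining $d^*$ to be attained at some finite $\mu^* \in [0,\infty)$, and the pointwise strict inequalities above yield $d^* < p^*$, i.e., strong duality fails. The main subtlety I expect is precisely this last step --- transferring a pointwise strict inequality to a strict inequality for the supremum --- and it is handled by the coercivity of the linear upper bound in $\mu$, which in turn is a direct consequence of the strict feasibility $b^T x^* < \beta$ that characterizes the LNGM case.
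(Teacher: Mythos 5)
Your proof is correct, but it takes a genuinely different route from the paper's. The paper argues through the \emph{hidden constraint} of the \TRS Lagrangian dual: the inner minimization $\min_x L(x,\lambda)$ is finite only when $A+\lambda I\succeq 0$, i.e., $\lambda\ge -\lambda_1$, whereas Theorem~\ref{lngmone} places the \LNGM multiplier strictly below $-\lambda_1$; so the \LNGM can never minimize the Lagrangian at a dual-feasible multiplier and hence admits no saddle-point certificate. You instead prove the quantitative gap $d^*<p^*$ for \eTRS directly: you collapse the two-multiplier dual to the one-parameter family $g(\mu)-\mu\beta$ via \TRS strong duality (the same reduction the paper uses in Corollary~\ref{cor:hardcasemu}), bound it above by $p^*+\mu(b^Tx^*-\beta)$ using weak duality at the feasible point $x^*$, obtain strictness at $\mu=0$ from non-globality of the \LNGM and at $\mu>0$ from $b^Tx^*<\beta$, and then convert the pointwise strict inequality into a strict inequality for the supremum via coercivity of the linear upper bound together with continuity of the concave function $g$. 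Your argument buys a self-contained proof of exactly the form in which the lemma is invoked in Theorem~\ref{t4} (\LNGM optimal for \eTRS with the linear constraint inactive implies a positive duality gap for \eTRSp), and it explicitly settles the attainment-of-the-supremum issue that a saddle-point argument leaves implicit; the paper's argument buys brevity and makes visible the structural reason for the failure, namely the indefiniteness of $A+\lambda^* I$ at the \LNGMp.
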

\begin{proof}
The Lagrangian of \TRS is given in \eqref{eq:lagrtrs}.
The Lagrangian dual of \TRS is
$\max_{\lambda \geq 0} \min_x L(x,\lambda)$. Since the inner problem is
a minimization of a quadratic, for it to be finite
we get the necessary (hidden) condition
that the Hessian of the quadratic $A+\lambda I \succeq 0$. This
contradicts the Lagrange multiplier condition for \LNGM given in Theorem
\ref{lngmone}.
\end{proof}
\begin{theorem}\label{t4}
Suppose that strong duality holds for \eTRS and that the optimal
solution of \eTRS is $x^{*}$. 
Then 
$b^Tx^{*}=\beta$  and $x^{*}$ is a global optimal solution of \TRS
after projection onto the linear manifold of the linear constraint.
\end{theorem}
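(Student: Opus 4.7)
The plan is to prove $b^Tx^{*}=\beta$ by contradiction, after which the projection statement follows from a short inclusion argument.

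First I would assume, for contradiction, that the linear constraint is inactive at $x^{*}$, so $b^Tx^{*}<\beta$. In a sufficiently small neighborhood of $x^{*}$ the feasible set of \eTRS then coincides with the feasible set of \TRSp, so $x^{*}$ is a local minimizer of \TRSp. By Assumption \ref{assum:slater}, the global \TRS minimizer is infeasible for \eTRS and so differs from $x^{*}$; consequently $x^{*}$ is a \LNGM of \TRSp.

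Next I would extract KKT data from the assumed strong duality and obtain a contradiction. Under strong duality for \eTRSp, there exists a dual optimal pair $(\lambda^{*},\mu^{*})\ge 0$ at which $x^{*}$ is an unconstrained minimizer of the Lagrangian
\[
L_e(x,\lambda^{*},\mu^{*})=q(x)+\lambda^{*}(\|x\|^2-\delta)+\mu^{*}(b^Tx-\beta),
\]
complementary slackness holds on both inequality constraints, and finiteness of $\min_x L_e$ forces $A+\lambda^{*} I\succeq 0$. Because $b^Tx^{*}<\beta$, complementary slackness on the linear constraint yields $\mu^{*}=0$, and the remaining relations reduce to the global \TRS optimality system of Theorem \ref{thm:globalTRS}. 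Hence $x^{*}$ would be a global minimizer of \TRSp, contradicting Assumption \ref{assum:slater} (this also reprises Lemma \ref{lem:strfaillngm}, which rules out strong duality at an \LNGMp). Therefore $b^Tx^{*}=\beta$.

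For the remaining assertion, let $\mathcal{F}_p=\{x:\|x\|^2\le\delta,\ b^Tx=\beta\}$ denote the projected feasible set. Parametrizing the affine manifold $\{x:b^Tx=\beta\}$ in an orthonormal basis for $\Null(b^T)$ exhibits $\min\{q(x):x\in\mathcal{F}_p\}$ as a standard \TRS in $n-1$ variables (the norm constraint becomes a ball constraint after completing the square). Since $\mathcal{F}_p$ is contained in the \eTRS feasible set and $x^{*}\in\mathcal{F}_p$ by the first part, $q(y)\ge q(x^{*})$ for every $y\in\mathcal{F}_p$, so $x^{*}$ is a global minimizer of this projected \TRSp. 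The main obstacle is justifying dual attainment and the full KKT system in the second step; for \eTRS this is standard, since the inner infimum $\min_x L_e$ equals $-\infty$ whenever $A+\lambda I\not\succeq 0$, the primal feasible set is compact, and complementary slackness then follows by the usual squeeze $p^{*}=d^{*}\le L_e(x^{*},\lambda^{*},\mu^{*})\le q(x^{*})=p^{*}$.
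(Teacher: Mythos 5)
Your argument is correct and follows essentially the same route as the paper: rule out $b^Tx^{*}<\beta$ by observing that $x^{*}$ would then be a local minimizer of \TRSp, that strong duality forces the multiplier/semidefiniteness conditions of Theorem \ref{thm:globalTRS} (so $x^{*}$ cannot be an \LNGMp, cf.\ Lemma \ref{lem:strfaillngm}), and that the global \TRS minimizer is excluded by Assumption \ref{assum:slater}; then note the projected problem is a \TRS whose feasible set sits inside that of \eTRSp. You merely fill in the KKT/complementary-slackness details that the paper delegates to Lemma \ref{lem:strfaillngm}.
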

\begin{proof}
If $b^Tx^{*}<\beta$, then either $x^*$ is the global minimizer or a
\LNGMp. Since strong duality fails for the \LNGMp, we conclude that it
must be the global minimizer of the \TRSp. But our Assumption \ref{assum:slater}
means that the global minimizer is infeasible for \eTRSp.

If the linear inequality is active, then we have a \TRS problem after a
projection onto the linear manifold and we obtain the global minimizer on
this affine manifold.
\end{proof}

\subsection{Main Algorithm}
 Theorem \ref{t4} suggests the following Algorithm \ref{alg:eTRSalgorithm} 
 for  \eTRSp. Without loss of generality, by Lemma \ref{lem:slater}, we
 can assume that strict feasibility holds.

 In addition, we see that the cost of the algorithm in the worst case is
  to find $\lambda_1, \lambda_2$ and eigenvector
$v_1$ for $\lambda_1$; check for strong duality; find the \TRS and
projected \TRS optima or the \LNGM and the projected \TRS optima.

\begin{table}[ht]
 \begin{mdframed}
	 \begin{algorithm}
	 \label{alg:eTRSalgorithm}
	 \begin{algorithmic}
 \STATE 
 ~~\\
 ~~\\{\bf INPUT:}  $A\in \Sn, a,b\in \Rn, \delta \in \R_{++}, \beta \in
 \R$ with
	$-\sqrt \delta \|b\| < \beta$. 
~~\\{\bf INITIALIZATION:}  Solve the symmetric eigenvalue problem
for $\lambda_1, \lambda_2$ and eigenvector $v_1$ for $\lambda_1$.
		 {\setstretch{.5}
\begin{enumerate}
	\item[{\bf IF:}]
$\lambda_1\geq 0$ or $\lambda_1=\lambda_2$, 
	{\bf THEN} Strong duality holds;
          solve \TRS for $x$.
\begin{enumerate}
	\item[{\bf IF:}]
          $x$ is feasible,  {\bf THEN} it is opt. STOP.
	\item[{\bf ELSE:}]
          Solve the projected \TRS problem for $x$; it is opt. STOP.
	\item[{\bf END:}]
\end{enumerate}
	\item[{\bf ELSE:}]
       Check the strong duality condition for \eTRSp.
       \begin{enumerate}
	\item[{\bf IF:}]
       strong duality holds, {\bf THEN} solve \TRS for $x$.
       \begin{enumerate}
	\item[{\bf IF:}] $x$ is feasible, {\bf THEN} it is opt. STOP.
	\item[{\bf ELSE:}]
          Solve the projected \TRS problem for x; it is opt.  STOP.
	\item[{\bf END:}]
\end{enumerate}
	\item[{\bf ELSE:}]
          Solve for the projected \TRS  and the \LNGM if it exists;
	  discard \LNGM if it is not feasible;
	  choose the $x$ as the best of the remaining solutions; it is opt. STOP.
	\item[{\bf END:}]
\end{enumerate}
	\item[{\bf END:}]
\end{enumerate}
}                    
 {\bf OUTPUT:} x is optimizer of \eTRSp. 
	 \end{algorithmic}
 \end{algorithm}
 \end{mdframed}
 \caption{\scriptsize Algorithm: Solve the General (strictly feasible) \eTRSp}
\end{table}

Recall that, if the \LNGM exists then we can use Theorem \ref{mainlngmpeig} 
and find it efficiently via the
second largest real eigenvalue of the matrix pencil.
The other subproblems are now discussed.

\subsection{Subproblems}

\subsubsection{Verifying Strong Duality}
	\label{sect:muvalues}

 To specify the value of $\mu$ in Theorem \ref{t3}, 
 first notice that, for given $\mu$, system (\ref{e1})
 is consistent if, and only if, $v^T(2a+\mu b)=0$ where $v$ is a 
 normalized eigenvector for $\lambda_{1}$. Next, let us 
 consider the following cases:
 \begin{enumerate}
	 \item \textbf{$v^Tb=0$:}
 In this case, we show that strong duality holds for \eTRSp. We show
 this by contradiction.  Suppose that strong duality does not
hold for \eTRSp. Then system (\ref{e1}) has two solutions $x_1$ and
$x_2$ satisfying $x_i^Tx_i=\delta $, $i=1,2$, and
$(b^Tx_1-\beta)(b^Tx_2-\beta)<0$ for some $\mu>0$.
 Moreover,  we know that the solutions $x_1$ and $x_2$  necessarily have the form
 $x_1= \frac 12(A-\lambda_{1}I)^\dag(-2a-\mu b)+z_1$ and
 $x_2=\frac 12(A-\lambda_{1}I)^\dag (-2a-\mu b)+z_2$ where $z_i$, for
$i=1,2$,   is  an eigenvector corresponding to
$\lambda_{1}$. By the fact that $b$ is orthogonal to
the eigenspace of $\lambda_{1}$ ($\lambda_{1}$ has
multiplicity one), we have  $b^Tx_1-\beta=b^Tx_2-\beta$, a contradiction to the
fact that $(b^Tx_1-\beta)(b^Tx_2-\beta)<0$, i.e.,~we have strong
duality  for \eTRSp.
\item
 \textbf{$ v^Tb\neq 0$:}
In this case, consistency of system (\ref{e1}), i.e., $v^T(2a+ \mu b)=0$
implies that necessarily $\mu=\frac{-2v^Ta}{v^Tb}$. If $\mu = 0$, it
follows from  Theorem \ref{t3} that \eTRS enjoys  strong duality. If
$\mu>0$, then  strong duality does not hold for \eTRS if, and only if,
system (\ref{e1}) for $\mu=\frac{-2v^Ta}{v^Tb}$ has two solutions $x_1$
and $x_2$ satisfying $x_i^Tx_i=\delta $, $i=1,2$, and
$(b^Tx_1-\beta)(b^Tx_2-\beta)<0$.
\end{enumerate}

To verify whether strong duality holds 
 we suppose that $x_i$, $i=1,2$  are as defined in Theorem \ref{t3}.
clearly,
 $x_i=x_p+\alpha_i v$  where  $v$ is a normalized eigenvector associated
 with $\lambda_{1}$, $x_p=\frac 12(A-\lambda_{1} I)^{\dag}(-2a-\mu b)$ and $\alpha_i$, $i=1,2$, are  roots of the following quadratic equation.
 $$\alpha^2+2\alpha v^Tx_p+x_p^Tx_p-\delta =0.$$
The main task in finding $x_i$, $i=1,2$, is computing $x_p$. In the
sequel, we show that $x_p$ is indeed the solution of a symmetric
positive definite linear system. To see this,  let us consider the
eigenvalue decomposition of $A$ defined as before in which $Q$ contains
$v$ as its first column. Noting that $v^T(2a+\mu b)=0$, we have
\begin{align*}
	(A+\gamma vv^T-\lambda_{1}I)^{-1}(-2a-\mu b)&=Q(\Lambda+\gamma
	e_1e_1^T-\lambda_{1} I)^{-1}Q^T(-2a-\mu
	b)\\&=Q(\Lambda-\lambda_{1} I)^{\dag}Q^T(-2a-\mu b)\\&=(A
	-\lambda_{1}I)^{\dag}(-2a-\mu b),
\end{align*}
where $\gamma$ is a positive constant and $e_1$ is the first unit vector. This implies that $x_p$ can be computed efficiently by applying the conjugate gradient algorithm to the following  positive definite  system.
\begin{align*}
	2(A+\gamma vv^T-\lambda_{1} I)x_p=(-2a-\mu b).
 \end{align*}
 However, we note that the perturbation with $\gamma vv^T$ is \emph{not}
 required since the right-hand side $(-2a-\mu b) \in \Range(A-\lambda_1 I)$.
 The MATLAB \emph{pcg} works fine even though the matrix is singular.

 \subsubsection{Solving the \TRS Subproblem}
The main work of the algorithms lie in solving generalized eigenvalue problems.
For the \TRSp, we use the method of \cite{AdachiIwataNakatsukasaTakeda:15}
 that solves the scaled \TRS
 \begin{align}\label{cl}
\min \quad &\frac{1}{2}x^TAx +a^Tx \notag \\
&x^TBx\leq \delta,
\end{align}
where $B$ is a positive definite matrix. The algorithm computes one 
generalized eigenpair and 
is able to handle  the hard case efficiently. Specifically, it is shown that 
the optimal Lagrange multiplier corresponding to the  solution of (\ref{cl}) 
is the largest real eigenvalue of  the $2n\times 2n$ matrix pencil 
$M_0+\lambda M_1$, where
\begin{align*}
	\tilde M(\lambda)=M_0+\lambda M_1, \qquad
M_0=\begin{bmatrix}
-B& A\\
A&-\frac{aa^T}{\delta}
\end{bmatrix}, \,\, M_1=\begin{bmatrix}
O_{n\times n}& B\\B& O_{n\times n}
\end{bmatrix}.
\end{align*}
As above we have an equivalent result to Lemma \ref{lem:geneigtrs} that every
nonzero KKT multiplier is a generalized eigenvalue of the pencil, $\det
(\tilde M(\lambda))=0$.
\begin{lemma}[\textdef{Generalized Eigenvalue of Pencil}, {\cite[Lemma
	3.1]{AdachiIwataNakatsukasaTakeda:15}}]
	\label{lem:geneigtrsB}
	For every nonzero KKT multiplier
	$\lambda_g^*\neq 0$ for \eqref{cl}
	with equality in the quadratic constraint
we have $\det \tilde M(\lambda_g^*)=0$, i.e.,~$\lambda^*_g$
is a {generalized eigenvalue of the pencil $\tilde M(\lambda)$}.
	\qed
\end{lemma}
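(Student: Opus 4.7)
The plan is to mimic the factorization-based proof of Lemma \ref{lem:geneigtrs} almost verbatim, adapted to the scaled constraint $x^TBx \leq \delta$. Let $\lambda := \lambda_g^*$ be a nonzero KKT multiplier associated with a point $x$ satisfying $x^TBx = \delta$, and set $D := A + \lambda B$. The stationarity condition for \eqref{cl} reads $Dx = -a$. The two identities I will use throughout are $Dx = -a$ (equivalently $Dxx^TD = aa^T$) and $x^TBx = \delta$.

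First I would establish the block factorization
\begin{equation*}
\begin{bmatrix} I & 0 \\ 0 & D \end{bmatrix}
\begin{bmatrix} -B & I \\ I & -\tfrac{1}{\delta}xx^T \end{bmatrix}
\begin{bmatrix} I & 0 \\ 0 & D \end{bmatrix}
\;=\; \tilde M(\lambda).
\end{equation*}
This is a direct computation: the outer multiplication yields
$\begin{bmatrix} -B & D \\ D & -\tfrac{1}{\delta}Dxx^TD \end{bmatrix}$,
and substituting $Dxx^TD = aa^T$ from $Dx=-a$ produces exactly $\tilde M(\lambda)$. Crucially, this identity holds whether or not $D$ is invertible, since I am never forming an inverse.

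Next I would exhibit a nonzero null vector of the middle matrix. Since $x \neq 0$ (as $\delta = x^TBx > 0$ and $B \succ 0$), the vector $\begin{pmatrix} x \\ Bx \end{pmatrix}$ is nonzero, and
\begin{equation*}
\begin{bmatrix} -B & I \\ I & -\tfrac{1}{\delta}xx^T \end{bmatrix}
\begin{pmatrix} x \\ Bx \end{pmatrix}
=
\begin{pmatrix} -Bx + Bx \\ x - \tfrac{x(x^TBx)}{\delta} \end{pmatrix}
=
\begin{pmatrix} 0 \\ 0 \end{pmatrix},
\end{equation*}
using $x^TBx = \delta$ in the second component. Hence the middle matrix is singular.

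Finally, taking determinants of both sides of the factorization and using multiplicativity gives
\begin{equation*}
\det \tilde M(\lambda) \;=\; \det(D)\cdot 0 \cdot \det(D) \;=\; 0,
\end{equation*}
which is the claim. I do not foresee a real obstacle here; the only subtlety worth flagging is that one should \emph{not} attempt to construct a null vector of $\tilde M(\lambda)$ by ``pulling back'' the null vector of the middle matrix through $\mathrm{diag}(I,D)$, because $D$ may be singular (this is precisely the $\lambda = -\lambda_i$ case that can occur for generalized eigenvalues of the pencil). The determinant identity sidesteps this entirely, since the factorization itself does not require $D$ to be invertible.
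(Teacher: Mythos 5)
Your proof is correct and follows essentially the same route the paper intends: the paper states this lemma without proof, remarking only that it is ``an equivalent result to Lemma \ref{lem:geneigtrs}'', and your argument is precisely the natural adaptation of that factorization proof, replacing $D=A+\lambda I$ by $D=A+\lambda B$ and the null vector $\bigl(\begin{smallmatrix}x\\x\end{smallmatrix}\bigr)$ by $\bigl(\begin{smallmatrix}x\\Bx\end{smallmatrix}\bigr)$. Your closing caveat about not needing (and not being able to assume) invertibility of $D$ is apt and consistent with how the determinant argument is used in the paper.
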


\begin{table}[ht]
 \begin{mdframed}
	 \begin{algorithm}
	\label{alg:trs}
\begin{enumerate}
\item
Solve $Ax_0=-a$ by the conjugate gradient algorithm and keep $x_0$ if it
is feasible, i.e.,~if $x_0^TBx_0\leq \delta$.
\item
	\label{item:geneig}
Compute $\lambda^{*}_g$, the largest generalized eigenvalue of the 
symmetric regular pencil $M_0+\lambda M_1$, and a corresponding eigenvector 
$\begin{pmatrix} y_1\\y_2\end{pmatrix}$, i.e.,
\begin{align}
\begin{bmatrix}
-B& A\\A & -\frac{aa^T}{\delta}
\end{bmatrix}
\begin{pmatrix}y_1\\y_2
\end{pmatrix}=-\lambda^{*}_g
\begin{bmatrix}
O_{n\times n}& B\\
B& O_{n\times n }
 \end{bmatrix}\begin{pmatrix}y_1\\y_2\end{pmatrix}.
\end{align}
\item
If $||y_1||\leq \tau $ (default is $\tau=10^{-4}$), then the hard case
is detected; run Steps \ref{item:compH} to \ref{item:eigenv}. Else go to Step
\ref{item:x1}.
	
\item 
	\label{item:compH}
	Compute $H:= (A+\lambda^{*}_g B+\alpha
	\sum_{i=1}^dBv_iv_i^TB)$ where $V=[v_1, ..., v_d]$ is a basis of
	$\Null(A+\lambda^{*}_g B)$ that is $B$-orthogonal, i.e.,
	$V^TBV=I$, $d=\dim (\Null(A+\lambda^{*}_g B))$ and $\alpha $ is an arbitrary positive scalar.
\item
	\label{item:conggrad}
Solve $Hq=-a$ by the conjugate gradient algorithm.
\item
	\label{item:eigenv}
Take an eigenvector $v$ computed above, and find $\eta$ such that $(q+\eta v)^TB(q+\eta v)=\delta$ and return $x^{*}=q+\eta v$ as global optimal solution of (\ref{cl}).
\item
	\label{item:x1}
Set $x_1=-\text{sign}(a^Ty_2)\sqrt{\delta}\frac{y_1}{\sqrt{y_1^TBy_1}}$.
\item
The global optimal solution of (\ref{cl}) is either $x_1$ or $x_0$, whichever gives the smaller objective value.
\end{enumerate}
\end{algorithm}
 \end{mdframed}
 \caption{\scriptsize Algorithm: Solve scaled \TRS \eqref{cl}, {\cite[Theorem 3.1]{AdachiIwataNakatsukasaTakeda:15}}}
\end{table}

 \subsubsection{Solving the Projected \TRS Subproblem}
We can eliminate the equality constraint $b^Tx=\beta$
to solve the projected \TRSp.  For ease of exposition, we assume that
\[
	|b_1| \geq |b_2| \geq \ldots \geq |b_r| > 0= b_{r+1}=\ldots = b_n.
\]
In order to find a basis of $\Null(b^T)$, we define \textdef{$\bar b$}$:=
\begin{pmatrix} b_2^{-1}&\ldots &b_r^{-1}  \end{pmatrix}^T$ 
and the matrix
\[
	\textdef{$W:=$}
	\left[
	\begin{array}{c|ccc}
		-b_1^{-1}e_{r-1}^T & 0_{n-r} \\
		\hline
		\Diag(\bar b) & 0\\0 & I_{n-r}
	\end{array}
\right] \in \R^{n\times(n-1)}.
\]
\index{$\hat x$, particular solution}
Define a \textdef{particular solution, $\hat x$} satisfying $b^T\hat x =
\beta, \|\hat x\|^2<\delta$.\footnote{Some scaling issues can arise here. It is preferable to
	take $\hat x$ strictly feasible for the trust region
constraint.} We choose
\begin{equation}
	\label{eq:xhat}
	\hat x=\left\{\begin{array}{cc}
			0, & \text{if   }  \beta = 0 \\
			\frac \beta {\|b\|^2} b, & \text {if   } \beta
			\neq 0.
		\end{array}\right.
\footnote{We note that the choice $\hat x=0$ simplifies the nonhomogeneous \nTRS
below.}
\end{equation}
Then it is clear that
\[
b^Tx = \beta \iff x = \hat x + Wy, \text{  for some  } y\in \R^{n-1}.
\]
We can now substitute for $x$ into \eTRS and eliminate the linear
equality constraint. The objective function becomes
\[
	(\hat x + Wy)^TA (\hat x + Wy)+2a^T(\hat x + Wy)=
	\left[y^T(W^TAW)y + 2\left(W^T(a+A\hat x)\right)^Ty\right] + 
	\left[ (A \hat x +2a)^T\hat x\right].
\]
The constraint becomes
\[
		y^T(W^TW)y + 2(W^T\hat x)^Ty\leq \delta -{\hat x}^T  \hat x.
\]
We get the following equivalent problem in the case that the linear constraint 
is active. 
\begin{equation}
	\label{TRSproj}
	\tag{\TRSproj}
	\index{\TRSprojp, projected \eTRSp}
	\begin{array}{rcl}
		&\min  & y^T(W^TAW)y + 2(W^T(a+A\hat x))^Ty \\
	& \text{s.t.} & y^T(W^TW)y + 2(W^T\hat x)^Ty\leq \delta -{\hat
	x}^T  \hat x
\end{array}
\end{equation}
We let
\[
B:=W^TW, \, \hat A:=W^TAW, \, \hat a:= W^T(a+A\hat x), \, \hat b:= 2(W^T\hat x).,\  \hat \delta= \delta-{\hat x}^T \hat x.
\]
Therefore, we need to solve the \textdef{nonhomogeneous \TRSp, \nTRSp}
\index{\nTRSp, nonhomogeneous \TRSp}
\begin{equation}
	\label{nTRS}
	\tag{\nTRS}
	\begin{array}{rcl}
		&\min  & x^T\hat Ax + 2\hat a^Tx \\
	 & \text{s.t.} & x^TBx + \hat b^Tx\leq \hat\delta.
\end{array}
		\footnote{We note
		again here that if $\beta =  0$ then we can choose
	$\hat x=0$ and the homogeneous \TRS is maintained.}
\end{equation}

By the  change of variables
\[
	x\leftarrow y+g, \quad \text{ with } \quad  2Bg=-\hat b,
\]
we get
\[
	\begin{array}{rcl}
x^T\hat A x + 2 \hat a^Tx 
&=&
(y+g)^T\hat A (y+g) + 2 \hat a^T(y+g) 
\\&=&
y^T\hat A y + 2 (\hat A g+ \hat a)^Ty  +\text{  constant}.
	\end{array}
\]
and
\[
	\begin{array}{rcl}
x^T B x +  \hat b^Tx 
&=&
(y+g)^T B (y+g) +  \hat b^T(y+g) 
\\&=&
y^T B y +  (2 B g+ \hat b)^Ty  +g^TBg +b^Tg
\\&=&
y^T B y +  g^TBg +b^Tg
	\end{array}
\]
We write \nTRS as the
\textdef{scaled homogeneous \TRSp, \sTRSp},
\index{\sTRSp, scaled homogeneous \TRSp}
\begin{equation}
	\label{sTRSh}
	\tag{\sTRS}
	\begin{array}{rcl}
		&\min  & y^T\hat Ay + 2(\hat Ag+\hat a)^Ty \\
	 & \text{s.t.} & y^TBy \leq \hat\delta-g^TBg-{\hat b}^Tg.
\end{array}
\end{equation}
This means we can directly apply the approach in
\cite{AdachiIwataNakatsukasaTakeda:15} where the scaled \TRS is solved
using the generalized eigenvalue approach.
\begin{remark}
When we solve for the optimimum in \sTRS using \eqref{alg:trs}
we do not form $B$ explicitly but
exploit the rank one update structure of $W$ and its inverse. This means
we can exploit the original sparsity in $A$ in the objective function
and in the, now scaled, $I$ in the original trust region constraint 
when performing the
matrix-vector multiplications needed for \emph{eigs} in MATLAB.
Let 
\[
	\bar B := \Diag(\bar b), \qquad
	\bar e := \begin{pmatrix}
	e_{r-1}^T \cr  
		\hline
	0_{n-r}
	\end{pmatrix}.
\]
Note that 
	\[
		\begin{array}{rcl}
		B
		&=&
	\left[
	\begin{array}{c|ccc}
		\bar B^2	&0 \cr
		\hline
		  0 & I_{n-r}
	\end{array}
\right] +
	b_1^{-2}
	\bar e \bar e^T
		\\&=&
	\left\{	\left[
	\begin{array}{c|ccc}
		\bar B	&0 \cr
		\hline
		  0 & I_{n-r}
	\end{array}
\right] +
	ww^T
\right\}
	\left\{	\left[
	\begin{array}{c|ccc}
		\bar B	&0 \cr
		\hline
		  0 & I_{n-r}
	\end{array}
\right] +
	ww^T
\right\}
		\\&=&
B^{1/2} B^{1/2}.
	\end{array}
	\]
We can then find the appropriate rank one update of 
	$\left[
	\begin{array}{c|ccc}
		\bar B	&0 \cr
		\hline
		  0 & I_{n-r}
	\end{array}
\right]$ to find the inverse $B^{-1/2}$. Therefore we can take a
diagonal congruence of both sides of \eqref{alg:trs} and obtain a simple
right-hand side of the generalized eigenvalue problem.
\end{remark}

\section{Numerical Results}
\label{sect:nums}
\index{\Socsdpp, second order cone and semidefinite programming}
We now present our numerical results to illustrate
the efficiency of the new algorithm. We compare with  the 
second order cone and semidefinite programming, \Socsdpp,
reformulation in \cite{BuAn:11} on some small instances 
as this reformulation is not able to handle large instances.  Hence, for large 
instances we just report the solution obtained by our new algorithm.  

All computations were done  in MATLAB 8.6.0.267246 (R2015b) on a 
Dell Optiplex 9020 with 16GB RAM with Windows 7.
To solve the \Socsdp reformulation, we used SeDuMi 1.3, \cite{S98guide}.

\subsection{Four Classes of Test Problems}
We divide our tests into \emph{four classes} I,II,III,IV, of test problems.
\subsubsection{Class I}
In this section, we apply our algorithm and the \Socsdp
reformulation to some \eTRS instances  for which the \LNGM of the
corresponding  \TRS is a good candidate for the 
global optimal solution of \eTRSp. To
generate the desirable random instances of \eTRSp, we proceed as follows. First
we construct a \TRS problem having a local non-global minimizer based
on Theorem \ref{mainlngmpeig}.
Then we add the inequality constraint $b^Tx\leq \beta$ to enforce that
 the global minimizer of \TRS is infeasible but that the \LNGM
 remains feasible.

Comparison with  the \Socsdp reformulation is given on some  small
instances in Table \ref{table:randomrepeat}. 
We follow~\cite{AdachiIwataNakatsukasaTakeda:15} and report
the relative objective function difference
$$\frac{|q(x^{*})-q(x_{best})|}{|q(x_{best})|} \qquad \text{accuracy
measure},$$
where $x^{*}$ is the computed solution by each method and $x_{best}$ is
the solution with the smallest objective value among the two algorithms. For
each dimension, we have generated $10$  \eTRS instances. We report the
dimension $n$, and the average values of the relative accuracy,
the run  time in cpu-seconds and we include the time
taken for checking the strong duality property of \eTRS in Algorithm  
\ref{alg:eTRSalgorithm}.
Moreover, for each dimension, \# \LNGM denotes the number
of test problems among the $10$ instances for which our algorithm has detected
the \LNGM of the corresponding \TRS as a global optimal  solution of
\eTRSp.  It should be noted that the algorithm which gets $x_{best}$
varies from problem to problem and since we are reporting  the average
of $10$ runs, we can have a positive accuracy in both columns of the table.


\begin{table}[ht]
\begin{scriptsize}
\begin{center}
	\begin{tabular}{|c|c|c|c|c|c|c|}
\hline
& Accuracy & Accuracy & CPUsec  & CPUsec & CPUsec & \# LNGM \\
\hline
& Main Algor. & SOCP/SDP & Main Algor & Str. Dual. & SOCP/SDP & Main Algor. \\
\hline
\hline
100 & 0.0  &  1.1309e-10  &     0.043  &     0.019  & 1.372e+00  &  10  \\
200 & 0.0  &  2.9945e-10  &     0.037  &     0.012  & 8.440e+00  &  10  \\
300 & 0.0  &  2.7884e-10  &     0.040  &     0.012  & 3.193e+01  &  10  \\
400 & 0.0  &  3.1309e-10  &     0.049  &     0.018  & 9.017e+01  &  10  \\
\hline
\hline
\end{tabular}
;
	\caption{\scriptsize Class I: Comparison with  \Socsdp reformulation.}
\label{table:randomrepeat}
\end{center}
\end{scriptsize}
\end{table}

We see in Table \ref{table:randomrepeat} that our algorithm finds the global
optimal solution of \eTRS significantly faster than the \Socsdp
reformulation and with improved accuracy. The generated matrix $A$ in
this the first class of test problems is dense and so we do
not perform tests of large size as the aim of our method 
is solving large sparse \eTRS instances. 

\subsubsection{Class II}
In this section we test our algorithm  on both small and large sparse
\eTRS instances.  we take advantage of the following lemma from 
\cite{Mar:94} to generate  such  \eTRS instances.
\begin{lemma}[Lemma 3.4 of \cite{Mar:94}]
	\label{test}
	Consider the \TRS problem. Suppose that
	$\lambda_1<0$,  has multiplicity one,  and the \TRS
	is an easy case instance. Then there exists $\delta_0>0$ such
	that \TRS admits a local non-global minimizer for all $\delta>\delta_0$.
	\qed
\end{lemma}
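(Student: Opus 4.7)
The plan is to verify the characterization of a \LNGM in Theorem~\ref{thm:lngmmain} combined with Theorem~\ref{lngmone}: a \LNGM exists precisely when the secular equation $\phi(\lambda) = \delta$ has a root $\lambda^* \in I := (\max\{0, -\lambda_2\}, -\lambda_1)$ that is the largest root of $\phi - \delta$ in $I$ and that satisfies $\phi'(\lambda^*) \geq 0$; to secure the strict second-order sufficient condition I will in fact arrange $\phi'(\lambda^*) > 0$.

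First I would study the boundary behavior of $\phi$ on $I$. Writing $\phi(\lambda) = \sum_i (q_i^T a)^2 / (\lambda_i + \lambda)^2$ as in \eqref{eh}, the multiplicity-one assumption $\lambda_1 < \lambda_2$ together with the easy case assumption $q_1^T a \neq 0$ forces the first summand to blow up as $\lambda \to (-\lambda_1)^-$, while all other summands remain bounded since $\lambda_i + \lambda$ stays bounded away from zero on $\overline I$ for $i \geq 2$. Hence $\phi(\lambda) \to +\infty$ at the right endpoint of $I$.

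Next I would set $\delta_0 := \inf_{\lambda \in I} \phi(\lambda)$. Since $\phi$ is strictly convex on $I$ by \eqref{eh}, continuous, and blows up at $-\lambda_1$, this infimum is attained in $\overline{I}$. The first summand alone gives $\phi(\lambda) \geq (q_1^T a)^2 / \max\{\lambda_1^2, (\lambda_1 - \lambda_2)^2\} > 0$ on $I$, so $\delta_0 > 0$. For any $\delta > \delta_0$, strict convexity together with the intermediate value theorem produces a unique largest root $\lambda^* \in I$ of $\phi(\lambda) = \delta$, and at that root necessarily $\phi'(\lambda^*) > 0$.

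Finally I would set $x^* := -(A + \lambda^* I)^{-1} a$; stationarity and $\|x^*\|^2 = \delta$ are immediate from the definition of $\phi$. The main obstacle is checking the reduced positive semi-definiteness $V^T (A + \lambda^* I) V \succeq 0$ required by Theorem~\ref{thm:lngmmain}, since $A + \lambda^* I$ still carries exactly one negative eigenvalue $\lambda_1 + \lambda^* < 0$ with eigenvector $q_1$. The key identity obtained by differentiating $\phi(\lambda) = \|(A+\lambda I)^{-1} a\|^2$ is
\[
\phi'(\lambda^*) \;=\; -2\,(x^*)^T (A + \lambda^* I)^{-1} x^*;
\]
expanding in the eigenbasis of $A$, the inequality $\phi'(\lambda^*) > 0$ forces $q_1^T x^* \neq 0$, so the negative eigen-direction of $A + \lambda^* I$ is not orthogonal to $x^*$. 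A standard Lagrange-multiplier computation then shows that the quadratic form $y^T(A + \lambda^* I) y$ restricted to $(x^*)^\perp$ is positive (indeed strictly so), yielding the reduced second-order condition and producing a genuine \LNGM for every $\delta > \delta_0$.
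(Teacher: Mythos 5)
The paper itself offers no proof of this lemma: it is imported verbatim as Lemma~3.4 of \cite{Mar:94}, so your argument can only be measured against the cited source, and your route is essentially Martinez's own. You analyze the secular function $\phi$ of \eqref{eh} on $I=(\max\{0,-\lambda_2\},-\lambda_1)$ (nonempty since $\lambda_1<\min\{0,\lambda_2\}$), use simplicity of $\lambda_1$ and the easy-case hypothesis $q_1^Ta\neq 0$ to force $\phi(\lambda)\to+\infty$ as $\lambda\to(-\lambda_1)^-$, set $\delta_0:=\inf_{I}\phi>0$, and argue that for every $\delta>\delta_0$ the largest root $\lambda^*$ of $\phi=\delta$ in $I$ has $\phi'(\lambda^*)>0$ and yields a strict local minimizer whose multiplier $\lambda^*<-\lambda_1$ rules out global optimality by Theorem~\ref{thm:globalTRS}. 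That skeleton is sound, your quantitative bound certifying $\delta_0>0$ is correct, and the strict-convexity argument that the largest root cannot have $\phi'\leq 0$ when $\delta>\delta_0$ is right.

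Two places need tightening. Minor: the summands with $i\geq 2$ are \emph{not} bounded on all of $\overline I$ when $\lambda_2<0$ and $q_2^Ta\neq 0$ ($\phi$ can also blow up at the left endpoint $-\lambda_2$); you only need, and only use, that $\lambda_i+\lambda\geq\lambda_2-(-\lambda)\to\lambda_2-\lambda_1>0$ near the right endpoint. More seriously, the final implication --- that $\phi'(\lambda^*)>0$ forces $y^T(A+\lambda^*I)y>0$ for all $0\neq y\perp x^*$ --- is the heart of the sufficiency argument and is only asserted; ``a standard Lagrange-multiplier computation'' does not name the actual mechanism. The clean argument is an inertia count: $D:=A+\lambda^*I$ has exactly one negative eigenvalue because $-\lambda_2<\lambda^*<-\lambda_1$ and $\lambda_1$ is simple; your identity gives $(x^*)^TD^{-1}x^*=-\tfrac12\phi'(\lambda^*)<0$; if some $0\neq y\perp x^*$ had $y^TDy\leq 0$, then $z:=D^{-1}x^*$ satisfies $z^TDz<0$ and $y^TDz=y^Tx^*=0$, with $y,z$ necessarily independent, so $D$ would be negative semidefinite on a two-dimensional subspace --- impossible. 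With that lemma supplied, and the one-line observation that $(A+\lambda^*I)x^*=-a$ with $x^*\neq 0$ pins the multiplier at $\lambda^*<-\lambda_1$ so that $x^*$ cannot satisfy the global optimality conditions, your proof is complete and matches the intent of the cited result.
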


The second class of test problems are generated as follows. We
generate a random sparse symmetric matrix $A$ via
\verb=A=\verb===\verb=sprandsym(n,density)=. Next we generate the vector
$a$ via \verb=a=\verb===\verb=randn(n,1)= and make sure that $v^Ta\neq
0$ where  $v$ is the eigenvector corresponding to
$\lambda_{1}$, i.e.,~we get the  easy case \TRSp. Then we
set $\delta=4000$ following  Lemma \ref{test}. Finally we set $b=0.9 \text{xopt}$
and $c=||b||^2$ to cut off $\text{xopt}$, the global optimal solution of the
generated \TRS instance.  We have compared our algorithm with the
\Socsdp reformulation on the test problems of small size  in both
runtime and solution accuracy.  For each dimension, we have generated
$10$   \eTRS instances and
the corresponding numerical results are presented in Table
\ref{table:density}, where we report the
     dimension of the problem $n$, the algorithm run  time and the time
     taken for checking the strong duality property of \eTRS,
     and the accuracy
      at termination averaged over the $10$  random instances. Moreover,
      for each dimension, \# \LNGM denotes the number of test problems
      among $10$  instances for which our algorithm has detected the \LNGM
      of the corresponding \TRS as a global optimal  solution of \eTRSp.
      It should be noted that the algorithm which gets $x_{best}$ varies
      from problem to problem and since we are reporting  the average of
      $10$  runs, we have positive accuracy in the Table. Furthermore, we
      verified that in all cases, there was a positive duality gap for
      generated \eTRS instances. As in the previous test
      problems the new algorithm finds higher
      accuracy solutions in significantly shorter time than the 
      \Socsdp reformulation.


\begin{table}[ht]
\begin{scriptsize}
\begin{center}
	\begin{tabular}{|c|c|c|c|c|c|c|}
\hline
& Accuracy & Accuracy & CPUsec  & CPUsec & CPUsec & \# LNGM \\
\hline
& Main Algor. & SOCP/SDP & Main Algor & Str. Dual. & SOCP/SDP & Main Algor. \\
\hline
\hline
100 & 0.0  &  4.2588e-09  &     0.093  &     0.028  & 1.697e+00  &  9  \\
200 & 0.0  &  1.0547e-08  &     0.128  &     0.030  & 1.167e+01  &  6  \\
300 & 0.0  &  9.3557e-09  &     0.180  &     0.036  & 4.694e+01  &  7  \\
400 & 0.0  &  3.3775e-09  &     0.252  &     0.042  & 1.287e+02  &  5  \\
\hline
\hline
\end{tabular}
;
\caption{\scriptsize Class II: Comparison with  \Socsdp reformulation; density
	$0.1$}
\label{table:density}
\end{center}
\end{scriptsize}
\end{table}

Now we turn to solving large sparse \eTRS instances. For this class we
just report the  results of our algorithm since the \Socsdp approach could
not handle problems of this size. Let $x^{*}$ be a global
optimal solution of \eTRS and $\lambda^{*}$ the  corresponding Lagrange 
multiplier. Depending on the context of the linear constraint being
not active or being active, we denote the error in the stationarity
condition by:
\textdef{$\KKTo$}$:=||(A+\lambda^{*} I)x^{*}+a||_{\infty}$ or
the corresponding conditions for the scaled active case, respectively;
and the error in complementary slackness by
\textdef{$\KKTt$}$:=\lambda^{*}(||x^{*}||^2-\delta)$ 
or the corresponding condition for the scaled linear active case,
respectively.
 For each dimension, we have generated $10$  \eTRS instances.
In both cases the global
optimal solution of \eTRS is obtained from solving generalized 
eigenvalue problems. Numerical results are presented in 
Table \ref{table:sparse1}. 

\begin{table}[ht]
\begin{scriptsize}
\begin{center}
	\begin{tabular}{|c|c|c|c|c|c|}
\hline
& Opt. Cond. & Opt. Cond. & CPUsec  & CPUsec & \# LNGM \\
\hline
& KKT1 eTRS & KKT2 C.S. & Algor Time  & Str. dual. Time & Main Algor. \\
\hline
\hline
10000 & 1.4085e-08  &  -1.3688e-12  &     1.087  &     0.168 & 4 \\
20000 & 1.3465e-10  &  -7.7060e-13  &     2.506  &     0.294 & 6 \\
40000 & 1.9584e-09  &  -3.8369e-14  &    10.343  &     0.963 & 2 \\
60000 & 1.9876e-10  &  1.8024e-14  &    13.694  &     1.912 & 4 \\
80000 & 1.8937e-10  &  5.3614e-13  &    26.768  &     3.253 & 5 \\
100000 & 8.5902e-11  &  2.8473e-12  &    29.225  &     5.415 & 2 \\
\hline
\hline
\end{tabular}
;
\caption{\scriptsize Class II: Large instances; density
	$0.0001$}
\label{table:sparse1}
\end{center}
\end{scriptsize}
\end{table}

The following lemma is useful in generating test problems for the next
two classes.
\begin{lemma}[Generating \LNGMp]
	\label{lem:genlngm}
Let $A\in \Sn$ and suppose that
$\lambda_1<\min\{0,\lambda_2\}$. Then there exists linear term $a$ for
which the eigenvector associated with $\lambda_1$ is the \LNGMp.
\end{lemma}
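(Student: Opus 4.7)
The plan is to reverse-engineer the necessary conditions of Theorem \ref{thm:lngmmain}: pick a candidate multiplier $\lambda^*$ in the admissible open interval, declare $x^* = \sqrt{\delta}\, q_1$ to be the desired \LNGMp, and then \emph{define} $a$ by stationarity so that all the conditions automatically hold. Concretely, under the hypothesis $\lambda_1 < \min\{0,\lambda_2\}$, the interval $(\max\{0,-\lambda_2\},-\lambda_1)$ is nonempty, so I can fix any $\lambda^* \in (\max\{0,-\lambda_2\},-\lambda_1)$, for instance its midpoint. Then I set
\[
a := -(A+\lambda^* I)x^* = -\sqrt{\delta}\,(\lambda_1+\lambda^*)\,q_1,
\]
which is a nonzero multiple of $q_1$ because $\lambda^* < -\lambda_1$ forces $\lambda_1+\lambda^* < 0$. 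By construction the stationarity equation $(A+\lambda^* I)x^* = -a$ and the norm condition $\|x^*\|^2 = \delta$ are satisfied.

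Next I would verify the second-order condition $V^T(A+\lambda^* I)V \succeq 0$ for any $V\in\R^{n\times(n-1)}$ with $[\tfrac{1}{\|x^*\|}x^*\mid V]\in\On$. Since $x^* = \sqrt{\delta}\,q_1$, the columns of $V$ span $q_1^\perp$, which coincides with $\spanl(q_2,\ldots,q_n)$. Using the spectral decomposition of $A$, the operator $V^T(A+\lambda^* I)V$ is orthogonally similar to $\Diag(\lambda_2+\lambda^*,\ldots,\lambda_n+\lambda^*)$, whose smallest entry is $\lambda_2+\lambda^*$. The choice $\lambda^* > \max\{0,-\lambda_2\} \geq -\lambda_2$ gives $\lambda_2+\lambda^* > 0$, so in fact $V^T(A+\lambda^* I)V \succ 0$ strictly.

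Finally I would argue that $x^*$ is a genuine \LNGMp. The strict second-order condition together with stationarity gives standard second-order sufficient conditions for $x^*$ to be a strict local minimizer of \TRSp. On the other hand $x^*$ is \emph{not} the global minimizer: if it were, Theorem \ref{thm:globalTRS} would require $A+\lambda^* I \succeq 0$, but by construction $q_1^T(A+\lambda^* I)q_1 = \lambda_1+\lambda^* < 0$. Hence $x^*$ is a local non-global minimizer, as desired. The only delicate point is ensuring the strictness of the second-order condition so that local minimality follows directly, and this is precisely why one chooses $\lambda^*$ in the open interval rather than at its endpoints; incidentally this also confirms that the resulting instance is an \emph{easy case} of \TRSp, since $a^Tq_1 = -\sqrt{\delta}(\lambda_1+\lambda^*) \neq 0$, consistent with Corollary \ref{cor:hardcase}.
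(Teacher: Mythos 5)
Your proposal is correct and follows essentially the same route as the paper: choose $\lambda^*$ in the nonempty open interval $(\max\{0,-\lambda_2\},-\lambda_1)$, take $x^*=\sqrt{\delta}\,q_1$, define $a=-(A+\lambda^*I)x^*$, and verify the stationarity, norm, and strict reduced second-order conditions of Theorem \ref{thm:lngmmain}. Your write-up is somewhat more complete than the paper's, since you also spell out why $a\neq 0$ and why $x^*$ cannot be the global minimizer (failure of $A+\lambda^*I\succeq 0$), points the paper leaves implicit.
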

\begin{proof}
Let $\mu\in(\max{\{0,-\lambda_2\}},-\lambda_1)$. Set $a=-(A+\mu I_n)v_1$
where $v_1$ is the eigenvector for $\lambda_1$ with $||v_1||^2=\delta$.
Then for this choice we have the first order stationary conditions.
Now let $\Range(W)=\text{Null}(v_1^T)$. Then $W^T(A+\mu I_n)W=\text{diag}
(\lambda_2+\mu, ..., \lambda_n+\mu)$. Due to the choice of $\mu$,
the diagonal matrix has all diagonal elements positive. Thus we have the
positive definiteness of the reduced Hessian. This implies that
$v_1$ is the \LNGMp.
\end{proof}

\subsubsection{Class III}
\label{sect:third}

In this section, we consider a class of large sparse \eTRS instances
for which strong Lagrangian duality holds while the corresponding \TRS
has a \LNGM which is feasible for \eTRSp.  We generate the \TRS using
the previous Lemma \ref{lem:genlngm}
and set $b=(A-\lambda_{1}I)x$ where
\verb=x=\verb===\verb=rand(n,1)=. This means that $b^Tv_1=0$  implying that
we have strong duality property for generated \eTRS instances.

Now let $x^{*}$ be a global optimal solution of \eTRSp. Then either
$b^Tx^{*}<\beta$ or $b^Tx^{*}=\beta$. Since strong duality holds, in the former
case, $x^{*}$ is the global minimizer of the corresponding \TRSp.
We define $\KKTo$ and $\KKTt$ as the previous
section. For each dimension, we have generated $10$  \eTRS instances and
the corresponding numerical results are presented in Table
\ref{table:strdualsparse}.

\begin{table}[ht]
\begin{scriptsize}
\begin{center}
	\begin{tabular}{|c|c|c|c|c|}
\hline
& Opt. Cond. & Opt. Cond. & CPUsec  & CPUsec \\
\hline
& KKT1 eTRS & KKT2 C.S. & Algor Time  & Str. dual. Time \\
\hline
\hline
10000 & 5.4076e-14  &  5.4076e-14  &     0.313  &     0.118 \\
20000 & 3.1243e-14  &  3.1243e-14  &     0.731  &     0.242 \\
40000 & 2.0866e-12  &  2.0866e-12  &     2.279  &     0.721 \\
60000 & 8.9301e-14  &  8.9301e-14  &     3.827  &     1.448 \\
80000 & 4.5073e-14  &  4.5073e-14  &     5.998  &     2.333 \\
100000 & 9.7731e-14  &  9.7731e-14  &     9.727  &     3.820 \\
\hline
\hline
\end{tabular}

\caption{\scriptsize Class III:  density $0.0001$}
\label{table:strdualsparse}
\end{center}
\end{scriptsize}
\end{table}

\subsubsection{Class IV}
For this class also we follow the above Lemma \ref{lem:genlngm} to
generate \TRS having \LNGMp. 
We follow the same procedure as in Section \ref{sect:third} to obtain
$A$, $a$, $\delta$ and \LNGM but we set $b=xopt-x_l$ and
$\beta=b^T(0.9x_l+0.1xopt)$  to cut off $xopt$ but leave $x_l$ feasible
where $xopt$ and $x_l$ are the global optimal solution  and \LNGM of the
corresponding \TRSp, respectively.

\begin{table}[ht]
\begin{scriptsize}
\begin{center}
	\begin{tabular}{|c|c|c|c|c|c|c|}
\hline
& Accuracy & Accuracy & CPUsec  & CPUsec & CPUsec & \# LNGM \\
\hline
& Main Algor. & SOCP/SDP & Main Algor & Str. Dual. & SOCP/SDP & Main Algor. \\
\hline
\hline
100 &   1.8488e-10  & 0.0  &    0.132  &     0.025  & 9.170e-01  &  10  \\
200 &   2.3815e-10  & 0.0  &    0.145  &     0.025  & 7.037e+00  &  10  \\
300 &   2.1072e-10  & 0.0  &    0.230  &     0.034  & 2.926e+01  &  10  \\
400 &   2.1792e-10  & 0.0  &    0.386  &     0.041  & 8.877e+01  &  10  \\
\hline
\hline
\end{tabular}

\caption{\scriptsize Class IV:  density $0.1$}
\label{table:randomrepeat2}
\end{center}
\end{scriptsize}
\end{table}

\begin{table}[ht]
\begin{scriptsize}
\begin{center}
	\begin{tabular}{|c|c|c|c|c|c|}
\hline
& Opt. Cond. & Opt. Cond. & CPUsec  & CPUsec & \# LNGM \\
\hline
& KKT1 eTRS & KKT2 C.S. & Algor Time  & Str. dual. Time & Main Algor. \\
\hline
\hline
10000 & 1.3807e-13  &  2.1481e-18  &     2.564  &     0.167 & 10 \\
20000 & 3.3108e-14  &  1.2592e-16  &     2.712  &     0.314 & 10 \\
40000 & 1.9213e-13  &  -9.3530e-16  &    10.682  &     0.981 & 10 \\
60000 & 3.8501e-13  &  7.6124e-16  &    19.285  &     2.060 & 10 \\
80000 & 6.2677e-14  &  3.1855e-16  &    29.587  &     3.736 & 10 \\
100000 & 1.1080e-13  &  -7.4408e-16  &    44.761  &     6.171 & 10 \\
\hline
\hline
\end{tabular}

\caption{\scriptsize Class IV:  density $0.0001$}
\label{table:class4sparse}
\end{center}
\end{scriptsize}
\end{table}

\section{Conclusion}
\label{sect:concl}
In this paper we have derived a new necessary condition for the local
non-global optimal solution \LNGM of
the \TRS that is based on the second largest real generalized eigenvalue of a matrix
pencil. This is then used to derive an efficient algorithm for finding
the global minimizer of the extended \TRSp, the \eTRSp. We have presented
numerical tests to show that our method far outperforms current methods
for \eTRSp. And our method solves large sparse problems which are
too large for current methods to be applied.
We have included discussions on a characterization of when strong
duality holds for \eTRS as well as details on the stability of the
problem. 

It is well known that \TRS is important for unconstrained trust
region methods, restricted Newton methods,
for unconstrained minimization; as well it is important for general 
minimization algorithms such as sequential quadratic programming
(\SQPp) methods. For \SQP methods it is customary to solve a standard quadratic
programming problem for the search direction after using something akin
to a quasi-Newton method to guarantee convexity of the objective
function. The \eTRS we have studied can be viewed as a step toward
solving a \TRS with multiple linear constraints for the search direction
in \SQP methods.

\addcontentsline{toc}{section}{Index}
\printindex

\addcontentsline{toc}{section}{Bibliography}
\bibliographystyle{plain}
\bibliography{.master,.edm,.psd,.bjorBOOK}

\end{document}